\documentclass[11pt]{article}
\usepackage{amssymb, amsmath,amsthm,cases, color,graphicx,enumitem,float}
\usepackage{epstopdf}
\usepackage{color}
\usepackage{bm}
\author{Valentin S. Afraimovich\thanks{Universidad Autonoma de San Luis Potosi, IICO}, 
Gregory Moses\thanks{Corresponding author, Department of Mathematics, Ohio University, \ttfamily{gm192206@ohio.edu}}, 
Todd Young\thanks{Ohio University, Department of Mathematics}}
\title{Two dimensional heteroclinic attractor in the generalized Lotka-Volterra system.}
\usepackage[section]{placeins}
\setlength{\parindent}{0in}
\setlength{\parskip}{.4cm}
\addtolength{\headsep}{-.2in}
\addtolength{\headheight}{-.3in}
\addtolength{\topmargin}{-.3in}
\addtolength{\oddsidemargin}{-.5in}
\addtolength{\evensidemargin}{-.5in}
\addtolength{\textheight}{1.4in}
\addtolength{\textwidth}{1.0in}

\newtheorem{defn}{Definition}[section]

\newtheorem*{thm*}{Theorem}
\newtheorem{thm}[defn]{Theorem}

\newtheorem{cor}[defn]{Corollary}
\newtheorem{lem}[defn]{Lemma}
\newtheorem{prop}[defn]{Proposition}

\newtheorem{rem}[defn]{Remark}

\begin{document}

\maketitle

\begin{abstract}
\noindent We study a simple dynamical model exhibiting sequential dynamics. We show that in this model
there exist sets of parameter values for which a cyclic chain of 
saddle equilibria, $O_k$, $k=1, \ldots, p$,  have two dimensional unstable manifolds that contain orbits 
connecting each $O_k$ to the next two equilibrium points $O_{k+1}$ and $O_{k+2}$ in the chain 
($O_{p+1} = O_1$). We show that  the union of these equilibria
and their unstable manifolds form a $2$-dimensional surface with boundary that is 
homeomorphic to a cylinder if $p$ is even and a M\"{o}bius strip if $p$ is odd.
If, further, each equilibrium in the chain satisfies a condition called ``dissipativity,"
then this surface is asymptotically stable.
\end{abstract}

\section{Background}

In the last decade it became clear that typical processes in many neural and cognitive networks are 
realized in the form of sequential dynamics 
(see \cite{RHL}, \cite{JFSMK}, \cite{RVTA}, \cite{RVLHAL}, \cite{RVSA}, \cite{ATHR} and 
references therein).  The dynamics are exhibited as sequential switching among metastable states, each of 
which represents a collection of simultaneously activated nodes in the network, so that at most 
instants of time a single state is activated. Such dynamics are
consistent with the winner-less competition principle \cite{RVLHAL,ATHR}.  
In the phase space of a mathematical model of such a system each 
state corresponds to an invariant saddle set and the switchings are determined by trajectories joining these 
invariant sets.  In the simplest case these invariant sets may be saddle equilibrium points coupled by heteroclinic 
trajectories, and they form a heteroclinic sequence (HS).  This sequence can be stable if all saddle equilibria have 
one-dimensional unstable manifolds \cite{ATHR}, in the sense that there is an open set of initial points such that 
trajectories going through them follow the heteroclinic ones in the HS, or unstable if some of the unstable 
manifolds are two-or-more dimensional.  In the latter case properties of trajectories in a neighborhood of the HS 
were studied in \cite{ATHR} and \cite{AVTR}.  General results on the stability of heteroclinic sequences have been obtained by Krupa and Melbourne \cite{melbourne1,melbourne2}
in the form of necessary conditions that may also be sufficient in the presence of certain algebraic conditions.

An instability of a HS may be caused if some initial conditions follow trajectories on the unstable 
manifold different from the heteroclinic ones.  M.~Rabinovich suggested \cite{R} considering the case when all 
trajetories on the unstable manifold of any saddle in a HS are heteroclinic to saddles in the HS, which implies an 
assumption that the HS is, in fact, a heteroclinic cycle.  In this case one can expect some kind of stability, not 
of the HS, of course, but of the object in the phase space formed by all heteroclinic trajectories of the 
saddles in the HS.  We study in our paper the Rabinovich problem.  We deal here with the generalized 
Lotka-Volterra model \cite{ATHR} that is a basic model of sequential dynamics for which unstable sets are 
realized as saddle equilibrium points. All variables and parameters may take only nonnegative values, 
so we work in the positive orthant of the phase space $\mathbb{R}^n$.  We impose some restrictions on 
parameters under which all unstable manifolds of the saddle point are two-dimensional and all trajectories on 
them (in the positive orthant) are heteroclinic in some specific way (see below).  We prove that they form a 
piece-wise smooth manifold homeomorphic to the cylinder if the number of the saddle points in the HS is 
even or to the M\"{o}bius band if it is odd.  We prove also that under the additional assumption that 
each equilibrium is {\em dissipative} (see below), then this manifold is the maximal attractor for some 
absorbing region (in the positive orthant).  Trajectories in this region may follow different heteroclinic 
trajectories and may manifest some kind of weak complex behavior.

Although our motivations are neurological, we observe that heteroclinic networks (and thus, potentially, high-dimensional counterparts of the same) are ubiquitous, appearing 
in applications that range from celestial dynamics \cite{koon} to evolutionary game theory \cite{hofbauer}.


\section{Notation and Results}

We begin our consideration of two-dimensional heteroclinic channels with the study of a series of Lotka-Volterra equations.  Lotka-Volterra models are widely used in the context of heteroclinic sequences where
all the unstable manifolds are one dimensional, so that each equilibrium is connected to exactly one subsequent equilibrium (e.g. \cite{horchler,schwappach,gonzalez,guckenheimer}).  It has been recently shown that they ask provide a general method for embedding directed graphs into a system of ordinary differential equations \cite{Ashwin1}.  We remark that although \cite{Ashwin1} allows graphs of high valency to be modeled by heteroclinic networks, and some work has been done on the stability of such systems (e.g. \cite{kirk}, which considers the competing dynamics of the ``overlapping" channels $\lambda_1 \rightarrow \lambda_2 \rightarrow \lambda_3 \rightarrow \lambda_1$ and $\lambda_1 \rightarrow \lambda_2 \rightarrow \lambda_4 \rightarrow \lambda_1$), study of heteroclinic networks has usually only considered one-dimensional unstable manifolds.  In keeping with the discussion of the introduction, we consider the dynamics of a system such that initial conditions arbitrarily close to any of $p \le n$ saddle nodes may be mapped into neighborhoods of either one of two other saddle nodes.  In particular, a simple general model for heteroclinic sequential dynamics was given in \cite{Afro} by
\begin{equation}
\label{eq:vectorfield}
\dot{x}_i = F_i(x) = x_i(\sigma_i - \sum_{j=1}^{n}\rho_{ij}x_j) \quad \text{ for } i=1,...,n,
\end{equation}
where all of the parameters are assumed to be positive.  
The constants $\rho_{ij}$ have biological meaning, representing inhibition of mode $i$ by mode $j$.  
For the sake of simplicity, we assume that $\rho_{ii}=1$ for all $i$.  Further, since 
the variable $x$ is assumed to encode biological information that is necessarily 
non-negative, e.g.\ activation levels or chemical concentrations, we restrict the system 
to the first closed orthant,  
$\overline{\mathbb{R}_+^n} =\{x \in \mathbb{R}^n: x_i \ge 0, 1 \le i \le n\}$.
The system is so constructed that it contains saddle points lying on the axes, with $\sigma_i$ 
being the $i$-th coordinate of the $i$-th saddle along the $i$-th axis ($\sigma_i>0$),  
i.e., the system \eqref{eq:vectorfield} has $n$ equilibrium points of the form 
$O_k=(0, ..., 0, \sigma_k, 0,  ... )$, for $k = 1,..,n$.  There may be other equilibria 
as well, but they are not relevant for our purposes; we only study transitions between the $n$ equilibria just defined.

In the present work we suppose that the first $p$ equilibria points are sequentially 
connected by a set of $2$-dimensional unstable manifolds.  For each $k$, $1 \le k \le p$,
there will be a heteroclinic orbit connecting $O_k$ to $O_{k+1}$ and a heteroclinic orbit
connecting $O_k$ to $O_{k+2}$.  Furthermore, the  system is closed in the sense 
that $O_{p+i}=O_i$, i.e. $p$ is the modulus of the subscript.
In the following,  we will consider the restrictions necessary to enforce such dynamics. 

\begin{figure}
\centering
\includegraphics[width=\textwidth]{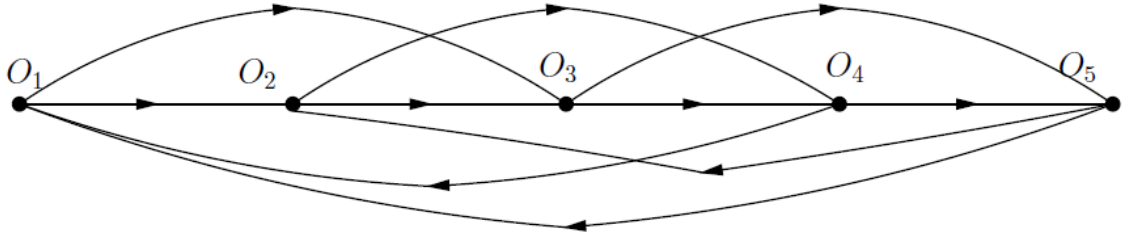}
\caption{A rough representation of the dynamics of the system defined by Equation~\ref{eq:vectorfield}, with $p=5$.}
\end{figure}

To ensure that there are heteroclinic trajectories between $O_k$ and both 
$O_{k+1}$ and $O_{k+2}$, we apply eigenvalue conditions to the system.  
Consider first $O_1$. The linearization of the vector field $F(x)$ \eqref{eq:vectorfield} at $O_1$ is given by the upper triangular matrix:
\begin{equation}\label{DFO1}
DF(O_1) = \left( \begin{array}{cccccc}
-\sigma_1  & - \sigma_1 \rho_{12} & - \sigma_1 \rho_{13} & - \sigma_1 \rho_{14} &  \cdots         & - \sigma_1 \rho_{1n}  \\
     0           &  \sigma_2-\rho_{21}\sigma_1 &        0            &               0                  &    \cdots       &        0                       \\
     0           &           0          &  \sigma_3-\rho_{31}\sigma_1 &              0                 &   \cdots          &        0                       \\
     0           &           0          &                                0          &  \sigma_4-\rho_{41}\sigma_1 &  \cdots  &      \vdots               \\
   \vdots     &       \vdots      &                               \vdots  &                0                     &     \ddots      &         \vdots             \\
     0           &           0          &                                0          &                0                    &       \cdots     &          \sigma_n-\rho_{n1}\sigma_1 
     \end{array} \right),
\end{equation}
and so the eigenvalues appear on the diagonal.
The matrices $DF(O_k)$ have a similar simple structure, zeros everywhere except
on the diagonal and on the $k$-th row.
It is then easy to see that the eigenvalues of $DF$
at $O_k$ are 
$$ 
\lambda_k^k=-\sigma_k \quad \text{ and } \quad\lambda_j^k=\sigma_j - \rho_{jk}\sigma_k, \, \, \text{ for } j \ne k.
$$  
In particular, the eigenvalues are all real. One can also see from the structure of \eqref{DFO1} that $DF(O_k)$ 
has a full set of eigenvectors, even if some eigenvalues are repeated.

Note that because of the particular form of the equations, all coordinate
axes, planes and hyperplanes are invariant. 
Thus, in order that trajectories can travel from $O_k$ to $O_{k+1}$ or $O_{k+2}$, 
it is sufficient to put the restriction $0 < \lambda_{k+2}^k, \lambda_{k+1}^k$, 
and $\lambda_j^k<0$ otherwise, to ensure that they can only go in those directions.  
In particular, for each $k$, $1 \le k \le p$, we require that
\begin{gather}
0  <  \min_{i=1,2} \{ \sigma_{k+i}-\rho_{k+i,k}\sigma_k \} \, (\text{indices mod } p), \label{ew_unstable}\\
\lambda_j =  \sigma_j-\rho_{jk}\sigma_k<0, \quad \text{ for } j \ne k, k+1, k+2 \mod p.  \label{ew_stable}
\end{gather}
Note also  that $-\sigma_k <0$.
These inequalities guarantee that each equilibrium is a hyperbolic saddle with $2$ unstable directions and
$n-2$ stable directions.
Let $\widetilde{W}_k^u=W^u(O_k) \cap \overline{\mathbb{R}_+^n}$ be the unstable 
manifold of $O_k$ restricted to the positive orthant.    We show below that 
$$
\Gamma \equiv \bigcup_{k=1}^p (\widetilde{W}_k^u \cup O_k) 
$$
forms a piecewise smooth surface that we will classify topologically as follows. 
\begin{thm}
\label{formofgamma}
Suppose that inequalities \eqref{ew_unstable} and \eqref{ew_stable} hold for each $k$, $1 \le k \le p$ and that each unstable manifold $\widetilde{W}^u(O_k)$ is contained 
in a compact forward invariant set as specified in Lemma~\ref{positivelyinvariantset} (see also Remark 3.11).
When $p$ is even, the union of unstable manifolds $\Gamma$ is homeomorphic to a cylinder.  
When $p$ is odd, $\Gamma$ is homeomorphic to a M\"{o}bius strip
\end{thm}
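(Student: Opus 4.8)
My plan is to reduce the statement to a finite gluing of topological disks and then invoke the classification of compact surfaces with boundary. First I would study each unstable manifold in isolation inside the three--dimensional invariant octant $P_k := \{x \in \overline{\mathbb{R}_+^n} : x_j = 0 \text{ for } j \notin \{k,k+1,k+2\}\}$. Since all coordinate subspaces are invariant and $\widetilde{W}_k^u$ is tangent at $O_k$ to $\mathrm{span}(e_{k+1},e_{k+2})$, the whole manifold $\widetilde{W}_k^u$ lies in $P_k$. Restricting \eqref{eq:vectorfield} to $P_k$ and reading the eigenvalues off \eqref{ew_unstable}--\eqref{ew_stable}, one finds that within $P_k$ the point $O_k$ is a source, $O_{k+1}$ is a saddle with one--dimensional unstable separatrix along $e_{k+2}$, and $O_{k+2}$ is a sink, while the three coordinate faces carry the unique heteroclinic orbits $\gamma_{k\to k+1}$, $\gamma_{k\to k+2}$, and $\gamma_{k+1\to k+2}$. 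Using the compact forward--invariant set of Lemma~\ref{positivelyinvariantset} to prevent escape, I would then show that the closure $\overline{\widetilde{W}_k^u}$ is a closed topological disk $T_k$ whose boundary is the curvilinear triangle with vertices $O_k,O_{k+1},O_{k+2}$ and sides $\gamma_{k\to k+1}$, $\gamma_{k+1\to k+2}$, $\gamma_{k\to k+2}$. The decisive and, I expect, hardest point is that the side of $T_k$ opposite $O_k$ is \emph{exactly} the connection $\gamma_{k+1\to k+2}$: every interior orbit of $\widetilde{W}_k^u$ must converge to the sink $O_{k+2}$ while shadowing the saddle $O_{k+1}$ and its separatrix, so that no pinching or spurious accumulation occurs. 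This is a global statement about all orbits on $\widetilde{W}_k^u$, not merely their linearization at $O_k$, and is where the real work lies.

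Granting this description, the gluing is then forced by uniqueness of the planar connections. The side $\gamma_{k+1\to k+2}$ opposite $O_k$ in $T_k$ is literally the same orbit as the side $\gamma_{k+1\to k+2}$ emanating from $O_{k+1}$ in $T_{k+1}$, so $T_k \cap T_{k+1} = \gamma_{k+1\to k+2} \cup \{O_{k+1}, O_{k+2}\}$, whereas triangles with non--consecutive indices meet at most in a single vertex $O_j$. Setting $a_k := \gamma_{k\to k+1}$ and $b_k := \gamma_{k\to k+2}$, this exhibits $\Gamma$ as a simplicial complex with $p$ triangular faces $T_k$, $2p$ edges $\{a_k, b_k\}$, and $p$ vertices $\{O_k\}$, assembled cyclically so that $T_k$ shares $a_k$ with $T_{k-1}$ and $a_{k+1}$ with $T_{k+1}$, the edges $b_k$ remaining free. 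I would verify that $\Gamma$ is a compact connected surface with boundary by checking that each $a_k$ borders exactly two triangles and that each vertex $O_j$ has a half--disk neighborhood given by the fan $T_{j-2}, T_{j-1}, T_j$ lying between the free edges $b_{j-2}$ and $b_j$. Counting then gives $\chi(\Gamma) = p - 2p + p = 0$.

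Finally I would separate the parities. Orienting $T_k$ by the vertex order $(O_k, O_{k+1}, O_{k+2})$, the shared edge $a_{k+1}$ is traversed in the \emph{same} sense by $T_k$ and $T_{k+1}$, so a coherent orientation exists if and only if one can choose signs $\epsilon_k \in \{\pm 1\}$ with $\epsilon_{k+1} = -\epsilon_k$ all the way around the cycle, which closes up consistently exactly when $p$ is even. Equivalently, the boundary $\partial\Gamma = \bigcup_{k} b_k$ joins $O_k$ to $O_{k+2}$, and stepping by two through the indices yields two boundary circles when $p$ is even but a single boundary circle when $p$ is odd. Since a connected compact surface with boundary and $\chi = 0$ is a cylinder when it is orientable and a M\"obius strip when it is not, the classification theorem for surfaces completes the argument: $\Gamma$ is homeomorphic to a cylinder for $p$ even and to a M\"obius strip for $p$ odd.
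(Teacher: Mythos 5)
Your proposal follows essentially the same route as the paper's appendix: decompose $\Gamma$ into the $p$ curved heteroclinic triangles $T_k$, show each is a closed topological disk, determine the number of boundary components and the orientability by the parity of $p$, and invoke the classification of compact surfaces with boundary. The one step you explicitly defer --- that $\overline{\widetilde{W}_k^u}$ really is a disk whose far edge is exactly $\gamma_{k+1\to k+2}$, with no pinching or spurious accumulation --- is precisely where the paper spends its effort, parametrizing orbits by an angle $\phi$ at $O_k$ and a normalized arc length, and using Shilnikov-type estimates near $O_{k+1}$ to show the arc length extends continuously to that edge, so the two arguments agree in substance.
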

The proof of this theorem, which is slightly involved, is left for the appendix.

Consider the following definition.
\begin{defn}
Let $\Sigma^u$ and $\Sigma^s$ be the set of stable and unstable eigenvalues, respectively, of
the linearization of a vector field at a saddle equilibrium, i.e., $\max Re(\Sigma^s) < 0$ and $\min Re(\Sigma^u) >0$.
We say that the saddle is {\bf dissipative} if 
$$
   \max  Re(\Sigma^u)  < - \max Re(\Sigma^s) .  
$$
In other words, the weakest stable eigenvalue is stronger than the strongest unstable eigenvalue.
 (See \cite{Afro}.)
\end{defn}
In terms of the specific vector field under study all the eigenvalues in question are real 
and for each $k$ we have:
\begin{equation}\label{dis}
  \max_{i=1,2} \{ \sigma_{k+i}-\rho_{k+i,k}\sigma_k  \} 
        < \min_{j \neq k, k+1, k+2}  \{ |\sigma_j - \rho_{jk}\sigma_k|, \sigma_k \} \quad \text{ (indices mod $p$)}.
\end{equation}

The main goal of this manuscript is to show that, under the condition that each saddle 
equilibrium is dissipative, then $\Gamma$ is asymptotically stable. Specifically, our main theorem is:
\begin{thm}
\label{maintheorem}
Suppose that inequalities \eqref{ew_unstable}, \eqref{ew_stable} and \eqref{dis} hold for each $k$, $1 \le k \le p$ and that each unstable manifold $\widetilde{W}^u(O_k)$ is contained 
in a compact forward invariant set as specified in Section~\ref{section:positive_invarian}.
Then $\Gamma$ is asymptotically stable.
\end{thm}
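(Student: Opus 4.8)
The plan is to establish asymptotic stability in the usual two parts: exhibit a neighborhood $N$ of $\Gamma$ that is positively invariant and absorbing, and then show that every trajectory entering $N$ converges to $\Gamma$. The single mechanism behind the convergence is the dissipativity inequality \eqref{dis}: at each saddle it makes the weakest stable rate strictly larger than the strongest unstable rate, so that a passage near $O_k$ contracts the directions transverse to $\Gamma$ by more than it can expand along them. The compact forward invariant set of Lemma~\ref{positivelyinvariantset} keeps nearby trajectories bounded inside the orthant and supplies the coarse absorbing region; the real work is in the fine structure of the flow in a shrinking neighborhood of $\Gamma$.

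First I would fix linearizing coordinates near each $O_k$. Because $DF(O_k)$ is diagonalizable with real eigenvectors, I can take coordinates $(u,v)$ with $u=(u_1,u_2)$ along the unstable eigendirections $e_{k+1},e_{k+2}$ (rates $\nu_1=\lambda_{k+1}^k,\ \nu_2=\lambda_{k+2}^k>0$) and $v\in\mathbb{R}^{n-2}$ along the stable eigendirections (rates $-\mu_1,\dots,-\mu_{n-2}<0$); by \eqref{dis}, $\nu_{\max}:=\max_i\nu_i<\mu_{\min}:=\min_j\mu_j$. I then place an incoming cross-section $\Sigma_k^{\mathrm{in}}$ transverse to each heteroclinic orbit arriving at $O_k$ and an outgoing section $\Sigma_k^{\mathrm{out}}=\{|u|=\delta\}$ for small $\delta$. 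The heart of the local analysis is the transition map across the box: a trajectory entering with unstable part of size $|u_0|$ leaves after time $T\sim\nu_{\max}^{-1}\log(\delta/|u_0|)$, so any direction transverse to $\Gamma$ — which near $O_k$ is spanned by stable coordinates in the block $v$ — is multiplied by a factor of order $e^{-\mu_{\min}T}\sim(|u_0|/\delta)^{\mu_{\min}/\nu_{\max}}$. Since $\mu_{\min}/\nu_{\max}>1$ uniformly, this is a genuine contraction, sharper the closer the orbit starts to $\Gamma$.

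Next I would treat the global pieces. Along each heteroclinic connection the flow is a diffeomorphism of a compact tube, hence has derivative bounded above and below, so the global transition maps $\Sigma_k^{\mathrm{out}}\to\Sigma_{k+1}^{\mathrm{in}}$ and $\Sigma_k^{\mathrm{out}}\to\Sigma_{k+2}^{\mathrm{in}}$ distort transverse distances by only a bounded factor. Composing local and global maps along every route through the network, the unbounded contraction from the saddle passages dominates the bounded global distortion, so each composite map contracts transverse distance to $\Gamma$. Assembling the boxes and tubes yields the positively invariant absorbing neighborhood $N$, and iterating the contraction shows that the transverse distance of any orbit in $N$ tends to zero, i.e. $\Gamma$ attracts $N$; combined with positive invariance this also gives Lyapunov stability, hence asymptotic stability.

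The main obstacle I anticipate is not the per-saddle contraction estimate, which is standard once dissipativity is in hand, but the branched, piecewise-smooth geometry of $\Gamma$ from Theorem~\ref{formofgamma}: two incoming sheets and two outgoing directions meet at each $O_k$. One must define the transverse coordinate consistently across the seams where the sheets are glued and build $N$ so that it is positively invariant across the branching, controlling in particular orbits that pass close to an edge and are ``deciding'' which sheet of $\Gamma$ to follow. This is also the point where one must rule out the merely essentially asymptotically stable behavior familiar from competing heteroclinic channels (cf.\ \cite{kirk,melbourne1,melbourne2}), in which the cycle attracts most but not all of a neighborhood. I expect the strict, uniform per-saddle contraction $\mu_{\min}/\nu_{\max}>1$ supplied by \eqref{dis} to close this gap and give genuine asymptotic stability, but verifying that no nearby orbit leaks out between the sheets, with the contraction exponent applying uniformly right up to the boundary of each patch, is the delicate step.
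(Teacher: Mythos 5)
Your overall strategy is the paper's: a Shilnikov-type local estimate at each saddle showing that dissipativity \eqref{dis} forces the transverse distance to contract with exponent $\nu=\mu_{\min}/\nu_{\max}>1$ (this is exactly Theorem~\ref{ConvergenceToManifold}, with the same sections $S_0$, $S_1$ and the same $|\xi(T)|\le C|\eta(0)|^{\nu-e}$ bound), composed with Lipschitz global transition maps, iterated around the cycle to get $\epsilon\mapsto c_1\epsilon^{\nu_1-e_1}\mapsto c_2\epsilon^{(\nu_1-e_1)(\nu_2-e_2)}\mapsto\cdots$. Your closing paragraph also correctly identifies the branching at each $O_k$ as the delicate point, which the paper resolves by the case split between orbits that enter $V(O_{\alpha+1})$ before $V(O_{\alpha+2})$ and those that do not.

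There is, however, one concrete gap in your global step as stated. You model the global dynamics as ``a diffeomorphism of a compact tube'' around each heteroclinic \emph{connection}, but $\Gamma$ is two-dimensional: a generic orbit of $\widetilde{W}^u_\alpha$ is an interior orbit running from $O_\alpha$ directly to $O_{\alpha+2}$, and a point $\epsilon$-close to $\Gamma$ may shadow such an orbit while lying in no tube around any of the one-dimensional connections $\Gamma_{\alpha,\alpha+1}$, $\Gamma_{\alpha,\alpha+2}$, $\Gamma_{\alpha+1,\alpha+2}$. To get a uniformly bounded transit time and a uniform Lipschitz constant for the global map you need control over the whole compact sheet $\widetilde{W}^u_\alpha$ minus the saddle neighborhoods, not just over tubular neighborhoods of three curves. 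This is precisely what the paper's Section~\ref{section:3D} supplies: the positively invariant region of Lemma~\ref{positivelyinvariantset} and Theorem~\ref{3D} show that in the $(x_\alpha,x_{\alpha+1},x_{\alpha+2})$-subspace every interior orbit reaches a neighborhood of $O_{\alpha+2}$, and the full system is then treated as a perturbation of this three-dimensional restriction via continuous dependence on initial conditions. Your plan has no substitute for this ingredient; without it the claim that ``the global transition maps distort transverse distances by only a bounded factor'' is unsupported for orbits near the interior of a sheet. The fix is in the spirit of what you wrote, but it requires replacing the tubes by a neighborhood of the compact two-dimensional sheet and invoking the three-dimensional convergence result (or an equivalent compactness argument) to bound transit times uniformly.
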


The proof of Theorem~\ref{maintheorem} breaks roughly into two independent pieces.  We start by considering the trajectory of a representative point that is $\epsilon$-close to $\Gamma$, but is distant from each of the fixed points $O_i$.  In such a case, the dynamics of the system are controlled largely by three consecutive saddles $O_i$, $O_{i+1}$, and $O_{i+2}$.  In Section~\ref{section:3D}, we consider the restriction of  \eqref{eq:vectorfield} to three consecutive dimensions; we will gain information on the full-dimensional system by viewing it as a perturbation of this restriction.  The second part of the proof is to consider the dynamics as a trajectory passes near a fixed point; we consider this in Section~\ref{section:local}.

In Section~\ref{stablesurface} we prove the main theorem. In Section~\ref{parsers} we show that there is a
non-empty parameter set for which the conditions of the theorem are satisfied.


\section{Three Dimensions}
\label{section:3D}

We begin our proof of Theorems 2.1 and 2.3 with a study of the restriction of the system 
to three dimensional sub-spaces corresponding to three consecutive coordinate directions.  
Through a series of geometric lemmas, we prove the main result of the section, 
Theorem~\ref{3D}, which provides information on the behavior of trajectories inside 
this invariant subspace.  This will be used in later sections to complete the proofs of the 
main results by providing information on those parts of the 
full phase space where all but three coordinates are small.

\subsection{Set-up}

Let $O_i$, $O_{i+1}$, and $O_{i+2}$ be any three consecutive equilibria and restrict the 
system (\ref{eq:vectorfield}) to the three dimensions spanned by consecutive coordinates
$x_i$, $x_{i+1}$, and $x_{i+2}$.  For 
convenience, we will refer to these three variables as $x_1$, $x_2$, and $x_3$.  
Our goal in three dimensions is to show the existence of a compact, forward invariant 
set containing $O_1$, $O_2$, and $O_3$ such that any trajectory with an initial value 
in the interior of that set converges to $O_3$.  

Restricted to three dimensions, the  equations \eqref{eq:vectorfield} are reduced to
\begin{equation}
\label{threedimensionalrestriction}
\begin{split}
\dot{x}_1 = x_1(\sigma_1 - x_1 -\rho_{12}x_2 - \rho_{13}x_3),\\
\dot{x}_2 = x_2(\sigma_2 - x_2 -\rho_{21}x_1 - \rho_{23}x_3),\\
\dot{x}_3 = x_3(\sigma_3 - x_3 -\rho_{31}x_1 - \rho_{32}x_2).
\end{split}
\end{equation}
The restriction of the dimension of the unstable manifolds via the eigenvalue conditions, 
and the positivity conditions on $\sigma_1, \sigma_2, \text{and } \sigma_3$ yield the following 
inequalities:
\begin{gather}
-\sigma_1 < 0 < \sigma_j - \rho_{j1}\sigma_1, \, \, \, j = 2,3,  \label{ev1}\\
-\sigma_2 < 0 <\sigma_3-\rho_{32}\sigma_2, \label{ev2}\\
\sigma_1-\rho_{12}\sigma_2<0, \label{ev3}\\
\sigma_1-\rho_{13}\sigma_3<0, \label{ev4}\\
\sigma_2-\rho_{23}\sigma_3<0. \label{ev5}
\end{gather}
Of those inequalities, (\ref{ev1}) controls the behavior of the system at $O_1$, 
(\ref{ev2}) - (\ref{ev3}) control the behavior of the system at $O_2$, and 
(\ref{ev4}) - (\ref{ev5}) at $O_3$.  The point $(\sigma_1,0,0)$ is a saddle with a 
two-dimensional unstable manifold, $(0,\sigma_2,0)$ is a saddle with a 
one-dimensional unstable manifold, and $(0,0,\sigma_3)$ is a sink for the system~(\ref{threedimensionalrestriction}).  We remark that although \eqref{threedimensionalrestriction} has the form of the May-Leonard model, the particular parameter restrictions under consideration yield simple dynamics (see Theorem~\ref{3D}), and prevent the more complex behavior usually studied in that context.  In particular, they are inconsistent with the symmetric May-Leonard model as it was introduced in \cite{may_leonard}.

\begin{figure}[!htb]
\centering
\includegraphics[width=70mm]{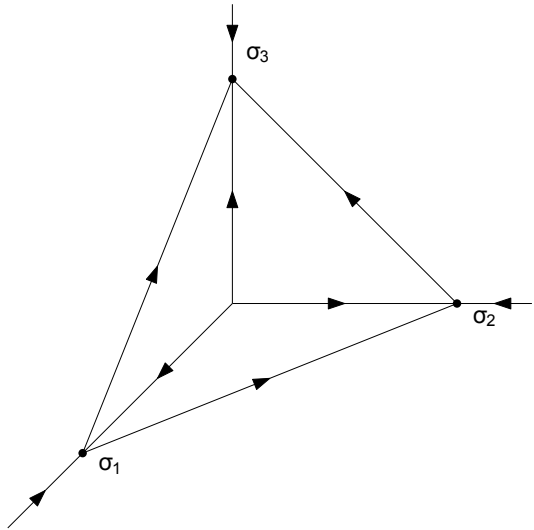}
\caption{Illustration of dynamics of the system projected onto the first three 
coordinates.  All consecutive triplets $(x_i, x_{i+1}, x_{i+2})$ (where $x_{p+i}=x_i$) 
of coordinates possess the same qualitative dynamics.}
\end{figure}

For each $i$, we will be interested in the points where $\dot{x}_i=0$; each such 
set is the union of the plane $x_i=0$ and some ``nontrivial" plane.  We designate those planes:
\begin{align}
P_1 := \{ \sigma _1 - \rho_{12}x_2 - \rho _{13}x_3 - x_1=0 \},\\
P_2 : = \{ \sigma _2 - \rho_{21}x_1 - \rho _{23}x_3 - x_2=0 \},\\
P_3 : = \{ \sigma _3 - \rho_{31}x_1 - \rho _{32}x_2 - x_3=0 \},
\end{align}
where $\dot{x}_i=0$ on $P_i$.

We will also refer to the plane passing through the points $(\sigma_1 , 0,0)$, 
$(0,\sigma _2 , 0)$, and $(0,0,\sigma _3)$, which we denote by $\Sigma$.  
Observe that $\Sigma$ is given by the equation
\begin{align*}
\Sigma:=\frac{x_1}{\sigma _1} + \frac{x_2}{\sigma _2} + \frac{x_3}{\sigma _3} = 1.
\end{align*}

For ease of discussion, we will also name the coordinate planes:
\begin{align*}
&P_{12}:=\{(x_1, x_2, x_3): x_3=0\},\\
&P_{23}:=\{(x_1, x_2, x_3): x_1=0\}, \\
&P_{13}:=\{(x_1, x_2, x_3): x_2=0\}.
\end{align*}

Because of the positivity conditions on the parameters and variables, we may 
use $P_1$, $P_2$, $P_3$, and $\Sigma$  as shorthand for the intersection of those 
planes with the first octant without the risk of confusion.  We observe that the 
intersection of $\Sigma$ and of each $P_i$ with $\overline{\mathbb{R}_+^n}$ 
is a compact triangle, a fact we will use repeatedly in the following section.

In order to describe the dynamics of orbits, we are interested in when one plane 
lies ``above" another in the first octant.  Consider the following definition:
\\

\begin{defn}
Observe that each plane $P_i$ can be written as the graph of a function $z^i(x_1,x_2)$.  
A plane $P_i$ {\bf dominates} a plane $P_j$ if $(x_1, x_2, z^i(x_1,x_2)) \in P_i$ and 
$(x_1,x_2,z^j(x_1,x_2)) \in P_j$ implies that $z^j(x_1, x_2)<z^i(x_1, x_2)$ for all $x_1, x_2>0$.  
A plane $P_i$ {\bf is dominated by} a plane $P_j$ if $P_j$ dominates $P_i$.
\end{defn}


\subsection{Geometrical Lemmas}

Each plane $P_i$ divides $\overline{\mathbb{R}_+^3}$ into two regions, one where $\dot{x}_i$ 
is positive and another where it is negative.  This allows information about 
$\dot{x}_i$ to be gained from purely geometric information.  For example, $\dot{x}_1$ is 
positive below $P_1$, and negative above it.  Since $P_3$ 
dominates $P_1$ (i.e. is always above it), we instantly see that $\dot{x}_1|_{P_3}<0$ (see Corollary~\ref{x1negativeonP3}, given below).  
We introduce geometric lemmas giving the information we can gain in this way; the proofs of 
all of them are parallel to one another, and can be summarized as follows: since the system is 
restricted to the first octant, we compare two planes (triangles) by seeing where they intersect the 
$x_1$, $x_2$, and $x_3$ axes.  Denote the compact triangle thus 
formed by a plane $S$ as $T_S$ and the non-zero component of its vertex on the $i$-th axis as $S^i$.  Then a plane 
$S$ dominates a plane $R$ if $R^i \le S^i$ for $i=1,2,3$, with at least one of those a strict equality, i.e. if its vertices are farther from the origin.

\begin{lem}
The plane $P_1$ is dominated by the plane $\Sigma$.
\end{lem}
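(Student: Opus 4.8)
The plan is to prove domination by comparing the three axis-intercepts of the two planes, which reduces the geometric claim to scalar inequalities that are immediate consequences of the eigenvalue conditions. Writing each plane in intercept form makes this transparent. The plane $\Sigma$ already appears as $\frac{x_1}{\sigma_1}+\frac{x_2}{\sigma_2}+\frac{x_3}{\sigma_3}=1$, so $\Sigma^1=\sigma_1$, $\Sigma^2=\sigma_2$, $\Sigma^3=\sigma_3$; and setting two coordinates to zero in the defining equation of $P_1$ gives $P_1^1=\sigma_1$, $P_1^2=\sigma_1/\rho_{12}$, and $P_1^3=\sigma_1/\rho_{13}$. The goal is to show $P_1^i \le \Sigma^i$ for each $i$ with strict inequality somewhere, and then invoke the intercept criterion summarized above to conclude that $z^{P_1}(x_1,x_2)<z^{\Sigma}(x_1,x_2)$ on the open quadrant.

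Since this is the first of the parallel geometric lemmas, I would first justify that intercept criterion rigorously, rather than attempt a direct comparison of the affine coefficients of the height functions $z^{P_1}$ and $z^{\Sigma}$ (that route is awkward, because the coefficient of $x_2$ in the difference has no a priori determined sign). The clean observation is that the intercepts control the nested solid simplices lying beneath each plane: if $P_1^i \le \Sigma^i$ for $i=1,2,3$, then $\frac{1}{\Sigma^i}\le\frac{1}{P_1^i}$, so for every $x\in\overline{\mathbb{R}_+^3}$ we have $\sum_i x_i/\Sigma^i \le \sum_i x_i/P_1^i$. Hence the closed tetrahedron bounded by $P_1$ and the coordinate planes is contained in the one bounded by $\Sigma$. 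Applying this to any point $(x_1,x_2,z^{P_1}(x_1,x_2))$ lying on $P_1$ in the first octant forces $x_1/\Sigma^1+x_2/\Sigma^2+z^{P_1}/\Sigma^3 \le 1$, which rearranges to $z^{P_1}(x_1,x_2)\le\Sigma^3\bigl(1-x_1/\Sigma^1-x_2/\Sigma^2\bigr)=z^{\Sigma}(x_1,x_2)$; and the inequality becomes strict on the open quadrant as soon as one intercept inequality is strict, since then the corresponding term $x_i/\Sigma^i<x_i/P_1^i$ strictly whenever $x_i>0$.

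With the criterion in hand, it remains to check the three comparisons. On the $x_1$-axis the intercepts coincide, $\Sigma^1=P_1^1=\sigma_1$, reflecting the shared vertex $(\sigma_1,0,0)$. On the $x_2$-axis, $P_1^2<\Sigma^2$ reads $\sigma_1/\rho_{12}<\sigma_2$, i.e. $\sigma_1-\rho_{12}\sigma_2<0$, which is exactly \eqref{ev3}. On the $x_3$-axis, $P_1^3<\Sigma^3$ reads $\sigma_1/\rho_{13}<\sigma_3$, i.e. $\sigma_1-\rho_{13}\sigma_3<0$, which is \eqref{ev4}. Thus all three comparisons hold with two of them strict, and the criterion yields that $\Sigma$ dominates $P_1$.

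The arithmetic is routine; the only step requiring care — and the one I would treat as the crux — is establishing the intercept-comparison criterion in a form valid on all of $\overline{\mathbb{R}_+^3}$, so that the equality of a single intercept is harmless. The nested-simplex argument handles this cleanly: equality on the $x_1$-axis causes no difficulty because domination is asserted only on the open quadrant $x_1,x_2>0$, where the strictness furnished by \eqref{ev3} and \eqref{ev4} already forces $z^{P_1}<z^{\Sigma}$.
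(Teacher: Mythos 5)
Your proof is correct and takes essentially the same route as the paper's: both reduce domination to comparing the three axis intercepts, noting the shared vertex at $O_1$ and citing \eqref{ev3} and \eqref{ev4} for the $x_2$- and $x_3$-intercepts. The only difference is that you also supply a nested-simplex justification of the intercept criterion on $\overline{\mathbb{R}_+^3}$, which the paper merely asserts in the paragraph preceding the geometric lemmas; this is a welcome elaboration rather than a different argument.
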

\begin{proof} We consider where each plane intersects each axis:
\begin{itemize}\itemsep1pt

\item The planes $P_1$ and $\Sigma$ both intersect the $x_1-$axis at the point $O_1$.

\item The plane $P_1$ intersects the $x_2$ axis at $(0,\frac{\sigma_1}{\rho_{12}},0)$, 
while $\Sigma$ intersects the axis at $O_2$.  We know from (\ref{ev3}) that 
$\frac{\sigma_1}{\rho_{12}}<\sigma_2$.

\item The plane $P_1$ intersects the $x_3-$axis at $(0,0,\frac{\sigma_1}{\rho_{13}})$, 
while $\Sigma$ intersects the axis at $O_3$.  We know from (\ref{ev4}), that 
$\frac{\sigma_1}{\rho_{13}}<\sigma_3$.
\end{itemize}
Since $P_1^i \le \Sigma^i$ for all $i$, $P_1$ is dominated by $\Sigma$.
\end{proof}

\begin{lem}
\label{P3dominatesSigma}
The plane $\Sigma$ is dominated by $P_3$.
\end{lem}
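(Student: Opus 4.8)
The plan is to apply, verbatim, the axis-intersection criterion for domination stated at the opening of this subsection: a plane $S$ dominates a plane $R$ as soon as the triangles $T_S$ and $T_R$ satisfy $R^i \le S^i$ for $i = 1, 2, 3$, with at least one strict inequality. Here the goal is to show that $P_3$ dominates $\Sigma$, so I would locate the three axis vertices of each triangle and compare them coordinatewise, exactly as in the proof of the preceding lemma.

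First I would record that $\Sigma$ meets the three axes at $O_1 = (\sigma_1, 0, 0)$, $O_2 = (0, \sigma_2, 0)$, and $O_3 = (0, 0, \sigma_3)$, so that $\Sigma^1 = \sigma_1$, $\Sigma^2 = \sigma_2$, and $\Sigma^3 = \sigma_3$. Setting two coordinates equal to zero in the equation defining $P_3$, I would find that $P_3$ meets the axes at $(\sigma_3/\rho_{31}, 0, 0)$, $(0, \sigma_3/\rho_{32}, 0)$, and $(0, 0, \sigma_3)$, giving $P_3^1 = \sigma_3/\rho_{31}$, $P_3^2 = \sigma_3/\rho_{32}$, and $P_3^3 = \sigma_3$. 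Comparing coordinatewise, the $x_1$-inequality $\Sigma^1 \le P_3^1$ reads $\sigma_1 \le \sigma_3/\rho_{31}$, i.e. $\sigma_3 - \rho_{31}\sigma_1 > 0$, which is exactly (\ref{ev1}) with $j = 3$; the $x_2$-inequality $\Sigma^2 \le P_3^2$ reads $\sigma_2 \le \sigma_3/\rho_{32}$, i.e. $\sigma_3 - \rho_{32}\sigma_2 > 0$, which is (\ref{ev2}). Both are strict, while along the $x_3$-axis the two triangles share the vertex $O_3$, so there $\Sigma^3 = P_3^3$ and the relation is an equality. Since two of the three comparisons are strict, the criterion applies and $P_3$ dominates $\Sigma$.

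The only point meriting care — and the nearest thing to an obstacle — is this shared vertex $O_3$: because the planes agree there, one might worry that domination, which the definition demands strictly for all $x_1, x_2 > 0$, could fail. I would dispel this by noting that $O_3$ sits at $x_1 = x_2 = 0$, outside the open quadrant in question, and by writing both planes as affine graphs $z^\Sigma$ and $z^{P_3}$ over $(x_1, x_2)$, which yields $z^{P_3} - z^\Sigma = \left(\tfrac{\sigma_3}{\sigma_1} - \rho_{31}\right)x_1 + \left(\tfrac{\sigma_3}{\sigma_2} - \rho_{32}\right)x_2$. The two coefficients are strictly positive by the very inequalities (\ref{ev1}) and (\ref{ev2}) used above, so $z^\Sigma < z^{P_3}$ throughout $x_1, x_2 > 0$, confirming strict domination in the interior and closing the argument.
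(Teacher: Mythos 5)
Your proof is correct and follows essentially the same route as the paper: compare the axis intersections of $\Sigma$ and $P_3$ and invoke inequalities (\ref{ev1}) and (\ref{ev2}). The extra paragraph verifying strict domination on the open quadrant via the explicit difference of the affine graphs is a welcome (if not strictly required) tightening that the paper leaves implicit in its stated criterion.
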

\begin{proof}  We consider where each plane intersects each axis:
\begin{itemize} \itemsep1pt

\item The plane $P_3$ intersects the $x_1-$axis at the point $(\frac{\sigma_3}{\rho_{31}},0,0)$, 
while $\Sigma$ intersects the axis at $O_1$.  We know from (\ref{ev1}) that 
$\sigma_1<\displaystyle \frac{\sigma_3}{\rho_{31}}$.

\item The plane $P_3$ intersects the $x_2$ axis at $(0,\frac{\sigma_3}{\rho_{32}},0)$, 
while $\Sigma$ intersects the axis at $O_2$.  We know from (\ref{ev2}) that 
$\sigma_2<\frac{\sigma_3}{\rho_{32}}$.

\item The planes $P_3$ and $\Sigma$ both intersect the $x_3$ axis at $O_3$.
\end{itemize}
Since $\Sigma^i \le P_3^i$ for all $i$, $P_3$ dominates $\Sigma$.
\end{proof}

The property ``is dominated by" is clearly transitive, so the following corollary holds.

\begin{cor}
\label{P3dominatesP1}
$P_1$ is dominated by $P_3$.
\end{cor}
Since $\dot{x}_1<0$ above the $P_1$ plane, the following corollary follows immediately.

\begin{cor}
\label{x1negativeonP3}
 On the plane $P_3$, $\dot{x}_1<0$.
\end{cor}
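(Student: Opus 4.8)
The plan is to read off the sign of $\dot{x}_1$ directly from the geometric domination relation already established, so that the corollary really does follow immediately. Since $\dot{x}_1 = x_1(\sigma_1 - x_1 - \rho_{12}x_2 - \rho_{13}x_3)$, for any point with $x_1 > 0$ the sign of $\dot{x}_1$ agrees with that of the affine function $g_1(x) := \sigma_1 - x_1 - \rho_{12}x_2 - \rho_{13}x_3$, which vanishes exactly on $P_1$. First I would record that $g_1$ is strictly decreasing in the $x_3$ coordinate, because the coefficient $-\rho_{13}$ is negative (all parameters being positive). Writing $P_1$ as the graph $x_3 = z^1(x_1,x_2)$, this monotonicity says precisely that $g_1(x) < 0$ if and only if $x_3 > z^1(x_1,x_2)$, i.e.\ exactly at points lying strictly above $P_1$ in the sense used in the definition of domination.

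Next I would invoke Corollary~\ref{P3dominatesP1}, which states that $P_1$ is dominated by $P_3$: by definition this means $z^1(x_1,x_2) < z^3(x_1,x_2)$ for all $x_1, x_2 > 0$. Hence any point of $P_3$ with $x_1, x_2 > 0$ has third coordinate $x_3 = z^3(x_1,x_2) > z^1(x_1,x_2)$ and therefore lies strictly above $P_1$. Combining this with the previous step gives $g_1 < 0$ there, and since $x_1 > 0$ we conclude $\dot{x}_1 < 0$ on $P_3$, as claimed.

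There is essentially no hard step here; the content is an immediate consequence of translating the domination relation into the correct inequality between the graph functions $z^1$ and $z^3$. The only points requiring a moment's care are bookkeeping. First, one must check that the direction of monotonicity of $g_1$ in $x_3$ (governed by the sign of $-\rho_{13}$) is consistent with the convention that ``above'' means larger $x_3$, which is exactly the convention the definition of domination via $z^i(x_1,x_2)$ employs. Second, on the boundary of the triangle $P_3 \cap \overline{\mathbb{R}_+^3}$ where $x_1 = 0$, the derivative $\dot{x}_1$ vanishes identically; since the domination inequality is only asserted for $x_1, x_2 > 0$, the strict conclusion $\dot{x}_1 < 0$ is to be read on the region where $x_1 > 0$, which is all that is needed in the sequel.
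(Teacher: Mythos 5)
Your proof is correct and follows essentially the same route as the paper, which deduces the corollary immediately from the fact that $\dot{x}_1<0$ above $P_1$ together with Corollary~\ref{P3dominatesP1}; you have simply made explicit the sign analysis of the affine factor and the boundary case $x_1=0$.
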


If we further had that $\Sigma$ dominates $P_2$, then the eigenvalue conditions 
introduced in \cite{Afro} and summarized as (3) - (8) would be sufficient to ensure 
the existence of a positively invariant region.  It happens, however, that  
this is not the case.

\begin{lem}
\label{SigmaP2}
The plane $\Sigma$ neither dominates nor is dominated by $P_2$.
\end{lem}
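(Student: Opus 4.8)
The plan is to follow exactly the intercept-comparison strategy used in the preceding geometric lemmas: since both $\Sigma$ and $P_2$ meet the first octant in compact triangles, I would compare them by locating where each crosses the three coordinate axes. First I would write the two planes as graphs over the $(x_1,x_2)$-quadrant,
\begin{align*}
z^{\Sigma}(x_1,x_2) &= \sigma_3 - \frac{\sigma_3}{\sigma_1}x_1 - \frac{\sigma_3}{\sigma_2}x_2, \\
z^{P_2}(x_1,x_2) &= \frac{\sigma_2}{\rho_{23}} - \frac{\rho_{21}}{\rho_{23}}x_1 - \frac{1}{\rho_{23}}x_2,
\end{align*}
and read off the axis intercepts: $\Sigma$ meets the axes at $\sigma_1,\sigma_2,\sigma_3$, while $P_2$ meets them at $\sigma_2/\rho_{21},\sigma_2,\sigma_2/\rho_{23}$.

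The key observation is that the intercept ordering is \emph{mixed}. Both planes pass through $O_2=(0,\sigma_2,0)$, so they agree on the $x_2$-axis. On the $x_1$-axis, inequality \eqref{ev1} gives $\sigma_1 < \sigma_2/\rho_{21}$, so $P_2$ is farther from the origin; on the $x_3$-axis, inequality \eqref{ev5} gives $\sigma_2/\rho_{23} < \sigma_3$, so $\Sigma$ is farther from the origin. Thus neither of the two chains of inequalities required for domination (namely $\Sigma^i \le P_2^i$ for all $i$, or $P_2^i \le \Sigma^i$ for all $i$) can hold, which is the content of the lemma.

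To make this fully rigorous against the graph definition of domination I would exhibit two points of the quadrant where the heights cross. At the origin we have $z^{\Sigma}(0,0) = \sigma_3 > \sigma_2/\rho_{23} = z^{P_2}(0,0)$ by \eqref{ev5}, so $\Sigma$ lies strictly above $P_2$ there, ruling out that $P_2$ dominates $\Sigma$. At $(\sigma_1,0)$ we have $z^{\Sigma}=0$ while $z^{P_2} = (\sigma_2-\rho_{21}\sigma_1)/\rho_{23} > 0$ by \eqref{ev1}, so $\Sigma$ lies strictly below $P_2$ there, ruling out that $\Sigma$ dominates $P_2$; continuity moves both witnesses into the open quadrant $x_1,x_2>0$ if one insists on strict positivity of the coordinates.

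The only subtlety — and the main point to be careful about — is that the intercept criterion is merely a \emph{sufficient} condition for domination, so failing it does not by itself negate domination. The real content is therefore the sign change of the affine height difference $z^{\Sigma}-z^{P_2}$, which is positive at the origin yet negative at $(\sigma_1,0)$; the two explicit crossing points above make this sign change concrete and close the argument.
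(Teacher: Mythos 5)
Your proof is correct and follows essentially the same route as the paper's: both compare the axis intercepts of $\Sigma$ and $P_2$, using \eqref{ev1} on the $x_1$-axis to show $\Sigma$ does not dominate $P_2$ and \eqref{ev5} on the $x_3$-axis to show $P_2$ does not dominate $\Sigma$. Your extra step of exhibiting an explicit sign change of $z^{\Sigma}-z^{P_2}$ in the open quadrant is a small added rigor that the paper leaves implicit, but it does not alter the argument.
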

\begin{proof} We consider where each plane intersects each axis:
\begin{itemize}\itemsep1pt

\item $P_2$ intersects the $x_1-$axis at the point $(\frac{\sigma_2}{\rho_{21}},0,0)$, while $\Sigma$ intersects the axis at $O_1$.  We know from (\ref{ev1}) that $\sigma_1<\frac{\sigma_2}{\rho_{21}}$, and therefore $\Sigma$ does not dominate $P_2$.

\item $P_2$ intersects the $x_3-$axis at $(0,0,\frac{\sigma_2}{\rho_{23}})$, while $\Sigma$ intersects the axis at $O_3$.  We know from (\ref{ev5}) that $\frac{\sigma_2}{\rho_{23}}<\sigma_3$, and therefore $P_2$ does not dominate $\Sigma$.
\end{itemize}
Thus, neither plane dominates the other.
\end{proof}

The situation is somewhat salvaged by the following.
\begin{lem}
\label{ParameterRestriction}
If $\frac{\sigma_2}{\rho_{21}} \le \frac{\sigma_3}{\rho_{31}}$, then $P_2$ is dominated by $P_3$.  
\end{lem}
\begin{proof}  
In the proof of Lemma~\ref{P3dominatesSigma} (second bullet point), we established that $P_2^2 \le P_3^2$.  In the proof of Lemma~\ref{SigmaP2} (second bullet point), we established that $P_2^3 \le P_3 ^3$.  All that remains for $P_3$ to dominate $P_2$ is for $P_2^1 \le P_3^1$, which occurs if and only if  $\frac{\sigma_2}{\rho_{21}} \le \frac{\sigma_3}{\rho_{31}}$.
\end{proof}
The parameter restriction in \eqref{ParameterRestriction} written in terms of the general systems gives that
for each $k$, $1 \le k \le p$, 
\begin{equation}\label{P23}
\frac{\sigma_{k+1}}{\rho_{k+1,k}} \le \frac{\sigma_{k+2}}{\rho_{k+2,k}}  \quad \text{ (indices mod $p$)}.
\end{equation}

Similarly to Corollary~\ref{x1negativeonP3}, we have the following:

\begin{cor}
\label{x2negativeonP3}
In the region of parameter space where the hypotheses 
of Lemma~\ref{ParameterRestriction} are satisfied, $\dot{x}_2 | _{P_3}<0$.
\end{cor}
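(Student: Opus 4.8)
The plan is to repeat the reasoning used for Corollary~\ref{x1negativeonP3}, now applied to the second coordinate. The $\dot x_2$ equation factors as $\dot x_2 = x_2\,(\sigma_2 - x_2 - \rho_{21}x_1 - \rho_{23}x_3)$, and in the interior of the orthant the prefactor $x_2$ is positive, so the sign of $\dot x_2$ is governed entirely by the sign of the linear term $g(x_1,x_2,x_3) := \sigma_2 - x_2 - \rho_{21}x_1 - \rho_{23}x_3$. By definition $g$ vanishes precisely on $P_2$, and since $\partial g/\partial x_3 = -\rho_{23} < 0$, the function $g$ is strictly decreasing in the $x_3$-direction. Consequently $g < 0$ --- and hence $\dot x_2 < 0$ --- at every point lying strictly above the plane $P_2$.

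First I would make the notion of ``above'' precise by writing both planes as graphs over the $(x_1,x_2)$-coordinates, $x_3 = z^2(x_1,x_2)$ for $P_2$ and $x_3 = z^3(x_1,x_2)$ for $P_3$. Evaluating $g$ at a point $(x_1,x_2,z^3(x_1,x_2))$ of $P_3$ and subtracting the defining relation $g(x_1,x_2,z^2(x_1,x_2))=0$ of $P_2$ gives the clean identity
\[
g\bigl(x_1,x_2,z^3(x_1,x_2)\bigr) = \rho_{23}\,\bigl(z^2(x_1,x_2) - z^3(x_1,x_2)\bigr).
\]
Under the hypothesis of Lemma~\ref{ParameterRestriction}, that lemma guarantees $P_3$ dominates $P_2$, i.e. $z^2(x_1,x_2) < z^3(x_1,x_2)$ for all $x_1,x_2 > 0$. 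Since $\rho_{23} > 0$, the right-hand side is strictly negative, so $g < 0$ on $P_3$ and therefore $\dot x_2|_{P_3} < 0$.

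There is no serious obstacle here; the statement is an immediate corollary once the dominance relation of Lemma~\ref{ParameterRestriction} is in hand. The only point that genuinely requires attention is that the parameter hypothesis $\sigma_2/\rho_{21} \le \sigma_3/\rho_{31}$ cannot be dropped: by Lemma~\ref{SigmaP2} the plane $P_2$ need not be dominated even by $\Sigma$ outside this regime, so the dominance by $P_3$ may fail, and there could then be points of $P_3$ lying below $P_2$ at which $\dot x_2 > 0$. Thus the role of Lemma~\ref{ParameterRestriction} is exactly to supply the sign-determining inequality $z^2 < z^3$, from which the corollary follows.
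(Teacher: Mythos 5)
Your proof is correct and follows the same route as the paper, which deduces the corollary immediately from the domination of $P_2$ by $P_3$ (Lemma~\ref{ParameterRestriction}) together with the fact that $\dot x_2<0$ above $P_2$; you have merely made that last step explicit via the identity $g(x_1,x_2,z^3)=\rho_{23}\bigl(z^2-z^3\bigr)$. The only (shared) caveat is that the strict inequality requires $x_2>0$, since $\dot x_2=0$ on the coordinate plane $x_2=0$.
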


We note here one additional observation.
\begin{lem}\label{box}
The rectangular box:
$$
   B = \{ x: 0 \le x_i \le \sigma_i, i=1, \ldots, n \}
$$
is forward invariant with respect to the system \eqref{eq:vectorfield}. Further,
the unstable manifolds $\widetilde{W}_k^u$ are all contained in $B$.
\end{lem}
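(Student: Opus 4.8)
The plan is to treat the two assertions separately. For the forward invariance of $B$, I would verify that the vector field $F$ is inward-pointing or tangent along the entire boundary $\partial B$, and then conclude by a standard tangency (Nagumo) argument. The boundary of $B$ consists of the ``lower'' faces $\{x_i=0\}$ and the ``upper'' faces $\{x_i=\sigma_i\}$. On a lower face we have $\dot x_i=F_i(x)=0$, so each coordinate hyperplane $\{x_i=0\}$ is invariant (as noted above) and no trajectory leaves $\overline{\mathbb{R}^n_+}$ across it. On an upper face, substituting $x_i=\sigma_i$ and using $\rho_{ii}=1$ gives
\begin{equation*}
\dot x_i = \sigma_i\Bigl(\sigma_i - \sigma_i - \sum_{j\ne i}\rho_{ij}x_j\Bigr) = -\,\sigma_i\sum_{j\ne i}\rho_{ij}x_j \le 0,
\end{equation*}
since all $\sigma_i,\rho_{ij}>0$ and all $x_j\ge 0$ on $B$. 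Thus the field never points outward across an upper face, so $B$ is forward invariant.

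For the second assertion I would reduce to a local computation at each $O_k$ and then propagate it by forward invariance. First I would observe that the three-dimensional coordinate subspace $V_k=\operatorname{span}(e_k,e_{k+1},e_{k+2})$ is invariant, contains $O_k$, and carries both unstable eigenvectors of $DF(O_k)$ (those for $\lambda_{k+1}^k,\lambda_{k+2}^k$); since $\dim W^u(O_k)=2$ and $W^u(O_k)$ is tangent to $V_k$ at $O_k$, invariance of $V_k$ forces $W^u(O_k)\subset V_k$. Next I would compute the unstable eigenvectors explicitly: for the eigenvalue $\lambda_{k+i}^k$ ($i=1,2$), the eigenvector normalized so that its $x_{k+i}$-component equals $+1$ has all components zero except that component and the $x_k$-component, the latter being $-\sigma_k\rho_{k,k+i}/(\lambda_{k+i}^k+\sigma_k)<0$ (the denominator is positive because $\lambda_{k+i}^k>0$). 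Hence both unstable directions point strictly into $B$ at the corner $O_k$: they increase $x_{k+i}$ away from $0$ while decreasing $x_k$ away from $\sigma_k$.

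From this I would conclude that a sufficiently small piece of $\widetilde W^u_k$ about $O_k$ lies in $B$: parametrizing the local unstable manifold by $(\xi_1,\xi_2)=(x_{k+1},x_{k+2})$ with $\xi_1,\xi_2\ge 0$ small yields $x_k=\sigma_k+c_1\xi_1+c_2\xi_2+O(|\xi|^2)$ with $c_1,c_2<0$, so $x_k\le\sigma_k$, while every remaining coordinate is small and nonnegative, hence in $[0,\sigma_j]$. Since $\widetilde W^u_k$ is the forward flow-image of this local piece (the closed orthant being invariant) and $B$ is forward invariant, the whole of $\widetilde W^u_k$ lies in $B$. I expect the only genuine obstacle to be this second assertion, and within it the verification that the unstable manifold truly enters $B$ rather than escaping through the face $\{x_k=\sigma_k\}$ on which $O_k$ sits; the sign of the $x_k$-component of the unstable eigenvectors is exactly what settles this, while the forward-invariance computation is routine.
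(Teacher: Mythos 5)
Your proposal is correct and follows essentially the same route as the paper's (very terse) argument: forward invariance via the sign of $\dot x_i$ on the faces $x_i=0$ and $x_i=\sigma_i$ using $\rho_{ii}=1$, and containment of $\widetilde W^u_k$ by observing that the unstable eigendirections at $O_k$, restricted to the orthant, have negative $x_k$-component and hence push the local unstable manifold strictly inside $B$, after which forward invariance finishes the job. You simply supply the eigenvector computation and the local-graph estimate that the paper leaves implicit.
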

Forward invariance follows immediately from the differential equations \eqref{eq:vectorfield} and the assumption that $\rho_{ii} = 1$.
The conclusion that the unstable manifold at $O_1$ is inside $B$ follows easily by noting that the unstable eigenspace
at $O_1$ (restricted to the first orthant) is strictly inside $B$.

\begin{figure}[!htb]\label{fig:domination}
\centering
\includegraphics[width=90mm]{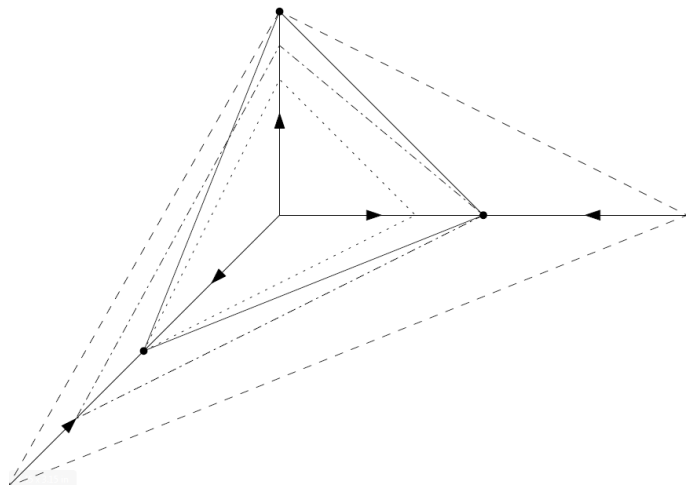}
\caption{The plane $P_1$ (dotted) is dominated by $\Sigma$ (solid) which is dominated by 
$P_3$ (dashes).  If the hypotheses of  Lemma~\ref{ParameterRestriction} are satisfied, 
then the plane $P_2$ (dashes and dots) is dominated by $P_3$.}
\end{figure}

Rather than requiring that $P_3$ dominates $P_2$, we could require only that $P_3$ dominates 
$P_2$ inside the forward invariant box $B$. 
Since $P_3^1$ is strictly outside of $B$ by Lemma~\ref{P3dominatesSigma} 
(see Figure~\ref{fig:domination}), this produces a strictly greater set of allowable
parameter values.


\subsection{Existence of a Positively Invariant Region}
\label{section:positive_invarian}
We remarked that our goal in three dimensions was to show the existence of a compact, forward-invariant set whose trajectories converge to $O_3$.  We now carry this out.

\begin{lem}
\label{positivelyinvariantset}
In the region of parameter space where the inequalities (\ref{ev1}) - (\ref{ev5}) and the 
hypotheses of Lemma~\ref{ParameterRestriction} are satisfied, the planes $P_3$, $P_{23}$, 
$P_{13}$, and $P_{12}$ enclose a positively invariant region, i.e. no trajectory leaves in positive time.
\end{lem}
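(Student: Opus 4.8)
The plan is to establish forward invariance by an elementary boundary-tangency argument (the content of Nagumo's theorem): an orbit can only leave a closed region by crossing its boundary outward, so it suffices to check that the vector field $F$ points into the region, or is tangent to it, at every boundary point. Set
\[
   g_3(x) := \sigma_3 - \rho_{31}x_1 - \rho_{32}x_2 - x_3,
\]
so that the enclosed region is the compact solid tetrahedron
\[
   R = \{\, x \in \overline{\mathbb{R}_+^3} : g_3(x) \ge 0 \,\},
\]
whose vertices are $(0,0,0)$, $(\sigma_3/\rho_{31},0,0)$, $(0,\sigma_3/\rho_{32},0)$ and $(0,0,\sigma_3)$. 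Its boundary is the union of three coordinate triangles lying in $P_{12}$, $P_{23}$, $P_{13}$ together with the triangle $T_{P_3} = P_3 \cap R$, and I will treat these four faces in turn.

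First I would dispose of the three coordinate faces using the product structure of the equations. Because every component has the form $\dot{x}_i = x_i(\cdots)$, each coordinate plane $\{x_i = 0\}$ is invariant: on it $\dot{x}_i = 0$, so $F$ is tangent to the face and no orbit starting in $R$ can cross $P_{12}$, $P_{23}$, or $P_{13}$. This is exactly the observation that already confines the flow to $\overline{\mathbb{R}_+^3}$.

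The substance of the proof is the face $T_{P_3} \subset P_3$. On $P_3$ the defining factor of $\dot{x}_3$ vanishes, so $\dot{x}_3 = 0$ there automatically, and therefore
\[
   \dot{g}_3|_{P_3} = -\rho_{31}\dot{x}_1 - \rho_{32}\dot{x}_2 - \dot{x}_3 = -\rho_{31}\dot{x}_1 - \rho_{32}\dot{x}_2 .
\]
By Corollary~\ref{x1negativeonP3} we have $\dot{x}_1 < 0$ on $P_3$, and by Corollary~\ref{x2negativeonP3} (available precisely because we assume the hypothesis \eqref{P23} of Lemma~\ref{ParameterRestriction}) we have $\dot{x}_2 < 0$ on $P_3$. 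Since $\rho_{31}, \rho_{32} > 0$, this yields $\dot{g}_3 > 0$ on the relative interior of $T_{P_3}$: as an orbit reaches $P_3$ the function $g_3$ is strictly increasing, so the orbit is pushed back into $\{ g_3 > 0 \}$ and cannot escape $R$ through $P_3$.

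The step I expect to require the most care is the behavior on the edges and vertices of $T_{P_3}$, where one of $x_1, x_2$ vanishes and the strict inequality of the relevant corollary degenerates to $\dot{x}_i = 0$ (forced by $\dot{x}_i = x_i(\cdots)$). On such an edge the orbit is trapped in the corresponding invariant coordinate plane, so any escape would have to occur within that plane; there the surviving inequality still gives $\dot{g}_3 \ge 0$. At the two nontrivial vertices one checks directly from \eqref{ev1}--\eqref{ev2} that the flow moves along the axis toward the origin --- for instance at $(\sigma_3/\rho_{31},0,0)$ one has $\dot{x}_1 = x_1(\sigma_1 - x_1) < 0$ because $\sigma_1 < \sigma_3/\rho_{31}$ --- while $(0,0,\sigma_3)$ is itself an equilibrium. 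Assembling these cases shows that $F$ points into $R$ or is tangent to $\partial R$ at every boundary point, so no trajectory leaves $R$ in positive time.
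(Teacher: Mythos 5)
Your proof is correct and follows essentially the same route as the paper: tangency of the vector field on the invariant coordinate planes, plus a sign check of the normal component of $F$ on the face of $P_3$ (your $\dot{g}_3>0$ is exactly the paper's $N\cdot F<0$ with $N=\langle\rho_{31},\rho_{32},1\rangle$), both resting on Corollaries~\ref{x1negativeonP3} and~\ref{x2negativeonP3}. Your extra attention to the edges and vertices of the face, where the strict inequalities of those corollaries degenerate because $\dot{x}_i=x_i(\cdots)$ vanishes, is a point the paper passes over silently and is a worthwhile refinement rather than a different approach.
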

\begin{proof}
No trajectory can leave through any of the $x_i=0$ planes, 
since $\dot{x}_i | _{x_i \text{-plane}}=0$.  
The outward normal vector to $P_3$ is $N=\langle \rho_{31} , \rho_{32} , 1 \rangle$, 
and its scalar product with the vector field \eqref{eq:vectorfield} is given by 
$\bar{n} \cdot F=\rho_{31}\dot{x}_1+\rho_{32}\dot{x}_2$ when $\dot{x}_3=0$.  
This is negative by Corollary~\ref{x1negativeonP3} and Corollary~\ref{x2negativeonP3}.
\end{proof}

\begin{rem}
\label{rem:forward}
Since the pair of inequalities $\rho_{31}\dot{x}_1<0$ and $\rho_{32}\dot{x}_2<0$ form a sufficient, but not 
necessary, condition for the inequality $\rho_{31}\dot{x}_1+\rho_{32}\dot{x}_2<0$ to be satisfied, 
a positively invariant region might exist even if the hypothesis of Lemma~\ref{ParameterRestriction} is not satisfied.  
For instance, since the box $B$ (Lemma~\ref{box}) is forward invariant in may be that
$\dot{x}_2|_{P_3 \cap B}$ is negative even when the hypotheses of Lemma~\ref{ParameterRestriction} fail.
Lemma~\ref{positivelyinvariantset} can therefore be extended to a region of parameter space 
that includes the region defined by Lemma~\ref{ParameterRestriction} as a proper subset.
\end{rem}
\begin{thm}
\label{3D}
Suppose that $P_3$, $x_1=0$, $x_2=0$, and $x_3=0$ enclose a positively invariant region.  
Any trajectory $\phi^t(x)$ in the above-described region that is not contained in $P_{12}$ (the $x_3=0$ plane) goes 
to $(0,0,\sigma_3)$ as $t \rightarrow +\infty$.
\end{thm}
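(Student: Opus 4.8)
The plan is to use a Lyapunov-type argument adapted to the simplex-like geometry of the problem. The key observation is that the system \eqref{threedimensionalrestriction} restricted to the positively invariant region has exactly four equilibria relevant to the dynamics: the origin (a source), $O_1$ (a saddle with two-dimensional unstable manifold), $O_2$ (a saddle with one-dimensional unstable manifold), and $O_3$ (the unique sink). Since the region is compact and forward invariant, the $\omega$-limit set of any trajectory starting in its interior is a nonempty, compact, connected invariant set contained in the region. The strategy is to rule out every candidate for the $\omega$-limit set except the fixed point $O_3$, which forces convergence to $O_3$.

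First I would show that the only invariant sets on the boundary of the region are the coordinate-plane dynamics, and that a trajectory starting strictly in the interior with $x_3 > 0$ cannot have its $\omega$-limit set on any face other than through $O_3$. Because each coordinate plane is invariant, I would analyze the induced two-dimensional Lotka-Volterra dynamics on each face $P_{12}$, $P_{13}$, $P_{23}$ separately, using the eigenvalue inequalities \eqref{ev1}--\eqref{ev5} to determine the flow direction and the sinks within each face. The hypothesis that the trajectory is not contained in $P_{12}$ is exactly what is needed to prevent it from being trapped on the one face whose internal dynamics do not lead to $O_3$ (this is the face containing the $O_1 \to O_2$ connection). On the remaining invariant faces the inequalities should show that $O_3$ is the attractor within the face.

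Next I would establish that no periodic orbit, heteroclinic cycle, or nontrivial recurrent set exists in the interior. The cleanest route is to rule out interior recurrence via a monotonicity or Dulac-type argument: I would look for a function, plausibly built from the coordinates (for instance a weighted sum or product resembling the defining form of $\Sigma$, or a quantity like $x_3$ itself which by Corollary~\ref{x1negativeonP3} and Corollary~\ref{x2negativeonP3} is increasing where the other coordinates are being pushed down), that is strictly monotone along interior trajectories. The domination relations already proved ($P_1$ and $P_2$ dominated by $P_3$) give precisely the sign information that $\dot{x}_1 < 0$ and $\dot{x}_2 < 0$ on and above $P_3$, which I expect to translate into the statement that trajectories are driven toward increasing $x_3$ and decreasing $x_1, x_2$. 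Combining this with the Poincar\'e--Bendixson theory applied on the invariant boundary faces and a LaSalle-type invariance argument in the interior should close off all limit sets except $O_3$.

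The main obstacle will be proving the absence of a periodic orbit or a more exotic invariant set in the interior of the three-dimensional region, since Poincar\'e--Bendixson does not apply directly in three dimensions. I expect the resolution to come from finding the right monotone quantity: the geometry of the dominating planes suggests that the ordering of the planes $P_1, P_2, \Sigma, P_3$ in the octant prevents the kind of rotational return needed for recurrence, so that a carefully chosen Lyapunov function (or a demonstration that one coordinate is eventually monotone, reducing the effective dynamics to a lower-dimensional problem where Poincar\'e--Bendixson does apply) should suffice. Once recurrence is excluded and the boundary limit sets are all shown to funnel into $O_3$, the connectedness of the $\omega$-limit set together with the local stability of $O_3$ (guaranteed by \eqref{ev4}--\eqref{ev5}, which make $O_3$ a sink) forces the $\omega$-limit set to equal $\{O_3\}$, completing the proof.
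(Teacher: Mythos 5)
Your overall skeleton is the paper's: take the $\omega$-limit set in the compact forward-invariant region and use a monotonicity/LaSalle argument to collapse it to $O_3$. But the step you flag as ``the main obstacle'' --- excluding periodic orbits and exotic recurrence in the interior of a three-dimensional region --- is not an obstacle at all, and treating it as one leads you toward machinery (Dulac functions, Poincar\'e--Bendixson on the faces, a case analysis of boundary dynamics) that is never needed. The region in question is by definition the set $\{x_i \ge 0,\ \sigma_3 - \rho_{31}x_1 - \rho_{32}x_2 - x_3 \ge 0\}$, i.e.\ it lies \emph{below} $P_3$, and on that set $\dot{x}_3 = x_3(\sigma_3 - \rho_{31}x_1 - \rho_{32}x_2 - x_3) \ge 0$ identically. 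So $x_3$ is non-decreasing and bounded above along every trajectory in the region, hence converges to some $x_3^*$; by invariance of the $\omega$-limit set, $x_3 \equiv x_3^*$ on it, which forces the $\omega$-limit set into $\{\dot{x}_3 = 0\} = P_3 \cup P_{12}$, and $P_{12}$ is excluded because $x_3$ is increasing from a positive initial value. This single observation kills all interior recurrence at once; no reduction to two dimensions is required.

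You also misattribute the source of this monotonicity: Corollary~\ref{x1negativeonP3} and Corollary~\ref{x2negativeonP3} ($\dot{x}_1 < 0$ and $\dot{x}_2 < 0$ on $P_3$) are not what makes $x_3$ increase --- that comes from being below $P_3$ --- rather, they are what the paper uses in the \emph{second} stage, once the limit set is known to sit on $P_3$: there, any limit point with $x_1 > 0$ or $x_2 > 0$ would have a forward orbit (inside the invariant limit set) along which $x_1$ or $x_2$ strictly decreases while $x_3$ must stay constant, which is incompatible with remaining on $P_3$; hence the limit set is pinned to $P_3 \cap P_{23} \cap P_{13} = \{(0,0,\sigma_3)\}$. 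Note also that your claim that trajectories are globally ``driven toward decreasing $x_1, x_2$'' is false in the interior of the region (near the origin $\dot{x}_1 > 0$); it is only true on and above $P_1$ and $P_2$ respectively, and only the behavior on $P_3$ itself is needed. With these corrections your plan closes, and it becomes the paper's proof.
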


\begin{proof}
Any trajectory in a compact positively invariant region has a non-empty $\omega$-limit set.  Fix a trajectory with initial condition $x$ in the interior of the region, and let $q$ be an arbitrary point in the $\omega$-limit set $\Omega_x$.  The proof breaks into three parts: 
we prove that $q$ lies on $P_3$, that it lies on $P_{23}$, and that it lies on $P_{13}$; 
the intersection of these planes is $(0,0,\sigma_3)$.  The proofs of the second and third 
statements are essentially the same as the proof of the first statement, which we treat in detail.

By way of contradiction, suppose that $q$ does not lie on $P_3$.  First of all, note that $q$ does 
not lie on $P_{12}$, because by assumption, the $x \notin P_{12}$, and since the trajectory increases in the $x_3$ variable, $\phi^t(x)$ cannot approach $P_{12} = \{x_3 = 0 \}$

Since $\dot{x}_3$ is continuous, and $q$ lies on 
neither $P_3$ nor $P_{12}$, the regions where $\dot{x}_3=0$, we can find a spherical 
neighborhood $O_r(q)$ centered at $q$ with radius $r$ such that 
$\dot{x}_3 |_{O_r(q)} >\epsilon >0$ for some $\epsilon$.  It follows that $q$ is not a fixed point, and $\Omega_x$ is not the singleton $\{q\}$, since $\omega-$limit sets are forward invariant \cite{perko}.

The $x_3$ component of $\phi^t(x)$ is non-decreasing
in the positively invariant set and it is bounded above, since it is bounded above by $\sigma_3$.  Thus it has a limit $x^*_3$.
It follows by continuity that the $x_3$ component of any point in $\Omega_x$
is also $x^*_3$.  We have supposed that $q \in \Omega_x$ but $q \notin P_3$;
now consider $\phi^t(q)$. Since $\dot{x}_3 >0$ at $q$, and thus
in a neighborhood of $q$, the $x_3$ component of $\phi^t(q)$ 
must be strictly increasing as a function of time along the forward 
solution near $q$. This contradicts
that the $x_3$ component is $x^*_3$ everwhere on $\Omega_x$.

We repeat the argument twice.  First, $q$ must lie on the plane $P_{23}$; otherwise $\dot{x_1}(q)<0$,
and the same contradiction will be obtained.  Then, using the same argument, we see 
that it must lie on $P_{13}$.  We observe here, with reference to 
Lemma~\ref{ParameterRestriction}, that even if $P_2$ is not dominated by $P_3$, 
it is dominated by it on the restriction to $P_{23}$, and the argument 
therefore goes through.  Thus $q$ must 
lie on the single point, $(0,0,\sigma_3)$, and our proof is complete.
\end{proof}


\section{The Dynamics in the Full Phase Space}
\label{section:local}

\subsection{Local Dynamics Near an Equilibrium $O_k$}

We assumed in (\ref{dis}) that the saddle points $O_k$ are dissipative. 
Denote by $\nu$ the ratio:
\begin{equation}
    \nu \equiv \frac{ \min |\{Re(\Sigma^s)\}|   }{  \max\{Re(\Sigma^u)\} }.
\end{equation}
We call $\nu$ the {\em minimal saddle value} of the equilibrium (see \cite{Shil}).
The equilibrium is dissipative if $\nu >1$.

 The main implication of this assumption 
is that  a trajectory starting at an initial value at a distance $\epsilon$ from the stable manifold of 
the saddle comes to a point of distance on the order of $\epsilon^\nu$ from the unstable manifold after going 
through a neighborhood of the saddle.  Formulating this result more strictly, we 
label the variables in a neighborhood of $O_k$ into the  $2$-dimensional unstable subspace 
$\eta = (x_{k+1},x_{k+2})$ and the $(n-2)$-dimensional stable subspace 
$\xi = \overline{(x_j)}, j \neq k+1, k+2$. Let $|  \cdot |$ denote the sup norm in these local coordinates.   By the Stable Manifold Theorem, for each $k$ there
exists $\delta_k > 0$ such that in a $\delta_k$-neighborhood of $O_k$, the unstable manifold
$W^u(O_k)$ is the graph of a (smooth) function $\xi = h_k^u(\eta)$. Let $\delta$ be the minimum of
these $\delta_k$ and consider the $\delta$-neighborhood $V_k$ of each $O_k$, $k=1, \ldots, p$

For fixed $k$, and $0 < \epsilon_k$ sufficiently small, define a pair of sections, 
$$
S_0 = \{ (\xi,\eta): |\xi| = \delta, |\eta| \le \epsilon_k \}  \qquad \text{and} \qquad 
S_1 = \{ (\xi,\eta): |\xi| \le \delta, |\eta| = \delta \}. 
$$
By the classical Shil'nikov variables technique (see \cite{Shil}), there exists $\epsilon_k$ sufficiently small
so that every forward solution starting at $S_0$ will intersect $S_1$ before leaving $V_k$. Let $\epsilon$
be the minimum of the $\epsilon_k$'s needed in the neighborhood of each $O_k$. Let $T$ denote the time at which
such a solution intersects $S_1$.

\begin{thm}
\label{ConvergenceToManifold}
If  $\delta$ and $\epsilon$ are sufficiently small and if $(\xi(0),\eta(0)) \in S_0$ 
and $(\xi(T),\eta(T)) \in S_1$ then $|\xi(T)| \le C|\eta(0)|^{\nu-e}$, where $C>0$ is independent of the initial point and $\nu -e>1$.
\end{thm}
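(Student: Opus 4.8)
The plan is to reduce the statement to the classical estimate for the passage of a trajectory through a neighborhood of a dissipative hyperbolic saddle, in which the exponent governing the contraction toward the unstable manifold is precisely the saddle value $\nu$. The heuristic is transparent in the linear approximation: writing the linearized flow in the eigenbasis, the stable component contracts at worst like $e^{-\mu_{\min} t}$ (where $\mu_{\min} = \min|\mathrm{Re}(\Sigma^s)|$ is the weakest stable rate), while the unstable component grows at most like $e^{\gamma_{\max} t}$ (where $\gamma_{\max} = \max \mathrm{Re}(\Sigma^u)$). Since $|\xi(0)| = \delta$ on $S_0$ and $|\eta(T)| = \delta$ on $S_1$, solving for the flight time $T$ from the unstable growth and substituting into the stable decay yields $|\xi(T)| \le C\,\delta^{1-\nu}|\eta(0)|^{\nu}$ with $\nu = \mu_{\min}/\gamma_{\max}$. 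The work is to show that the nonlinear terms perturb the exponent $\nu$ only by an amount $e$ that tends to $0$ as the neighborhood shrinks.

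First I would introduce local coordinates near $O_k$ in which both invariant manifolds are flat, i.e.\ the local stable manifold is $\{\eta = 0\}$ and the local unstable manifold is $\{\xi = 0\}$; such $C^1$ coordinates exist by the Stable Manifold Theorem (the excerpt already records that $W^u(O_k)$ is a graph $\xi = h_k^u(\eta)$). Because $DF(O_k)$ has real eigenvalues and a full eigenbasis, in these coordinates the flow takes the form $\dot\xi = A\xi + f$, $\dot\eta = B\eta + g$, with $A$ diagonalizable with eigenvalues $-\mu_i < 0$ and $B$ diagonalizable with eigenvalues $\gamma_j > 0$. Flatness of the manifolds gives $f(0,\eta) \equiv 0$ and $g(\xi,0) \equiv 0$, so that, since $f,g$ are at least quadratic and the transit takes place inside the $\delta$-ball $V_k$, one obtains the crucial one-sided bounds $|f(\xi,\eta)| \le K\delta|\xi|$ and $|g(\xi,\eta)| \le K\delta|\eta|$ for a constant $K$ depending only on the vector field.

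Next I would convert these into differential inequalities for $|\xi|^2$ and $|\eta|^2$. Using the quadratic-form estimate $\xi \cdot A\xi \le -\mu_{\min}|\xi|^2$ together with the bound on $f$ gives $\frac{d}{dt}|\xi| \le -(\mu_{\min} - K\delta)|\xi|$, and similarly $\frac{d}{dt}|\eta| \le (\gamma_{\max} + K\delta)|\eta|$. Integrating and using $|\xi(0)| = \delta$, $|\eta(T)| = \delta$ yields $|\xi(T)| \le \delta\, e^{-(\mu_{\min}-K\delta)T}$ and a lower bound on the flight time, $T \ge (\gamma_{\max}+K\delta)^{-1}\log(\delta/|\eta(0)|)$. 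That $T$ exists and the trajectory stays in $V_k$ until it reaches $S_1$ is exactly the Shil'nikov-variables statement preceding the theorem, so I may take it for granted. Combining the two displays eliminates $T$ and produces $|\xi(T)| \le C|\eta(0)|^{\nu - e}$ with $\nu - e = (\mu_{\min}-K\delta)/(\gamma_{\max}+K\delta)$ and $C = \delta^{\,1-(\nu-e)}$, both independent of the initial point; since $\nu > 1$ strictly by dissipativity and $e \to 0$ as $\delta \to 0$, shrinking $\delta$ secures $\nu - e > 1$.

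The step I expect to be the main obstacle is not the final bookkeeping but the reduction that makes it clean: obtaining the one-sided estimates $|f| \le K\delta|\xi|$ and $|g| \le K\delta|\eta|$. Without flattening the invariant manifolds, the nonlinear term in $\dot\xi$ would contain contributions depending on $\eta$ alone that could be of order $\delta^2$ while $|\xi|$ is much smaller, swamping the contraction and destroying the exponent. Flattening $W^s$ and $W^u$ simultaneously, and verifying that the change of coordinates is regular enough that $f,g$ remain quadratically small with the manifold-adapted vanishing, is the technical heart of the argument; everything downstream is Gr\"onwall-type estimation with constants chosen uniformly over $S_0$.
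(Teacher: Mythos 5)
Your proposal follows essentially the same route as the paper: straighten the unstable manifold so that the nonlinear remainders vanish on the respective invariant manifolds, derive Gr\"onwall-type differential inequalities whose rates are the extreme eigenvalues perturbed by a term that vanishes as $\delta,\epsilon\to 0$, and eliminate the flight time $T$ using $|\xi(0)|=|\eta(T)|=\delta$ to obtain the exponent $\nu-e>1$ from dissipativity. The only difference is cosmetic: the paper notes that $W^s(O_k)$ already lies in a coordinate hyperplane of the Lotka--Volterra system, so only the unstable manifold needs flattening (via $X_1=x_1-\sigma_1-h(x_2,x_3)$ together with an MVT factorization of the remainder), whereas you flatten both manifolds in the generic fashion.
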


The sections $S_0$ and $S_1$, together with Theorem~\ref{ConvergenceToManifold}, are illustrated in two dimensions in Figure~\ref{fig:schematics}
\begin{figure}
\centering
\includegraphics[width=.5\textwidth]{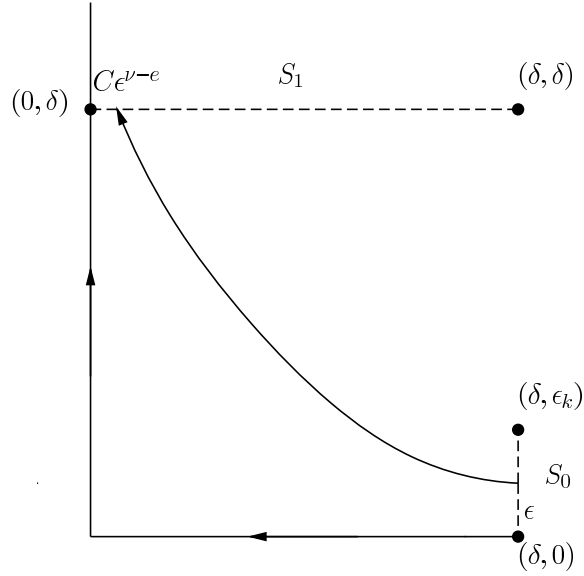}
\label{fig:schematics}
\caption{A schematic diagram of Theorem~\ref{ConvergenceToManifold}.  Initial conditions on a section intersecting a stable manifold are mapped in finite time to a section intersecting an unstable manifold, resulting in a contraction.}

\end{figure}

\begin{proof}
For ease of notation, consider $O_1$. 
Note from \eqref{DFO1} that the eigenvector corresponding to the first diagonal element, $- \sigma_1$,
can have only one non-zero component and that is in the $x_1$ direction. The eigenvector corresponding to the 
$j$-th diagonal element, $j \neq 1$, has non-zero components in the $x_1$ and $x_j$ directions only.

Now take into account that the hyperplane $\{(x_2 = x_3 = 0)\}$ is invariant under the flow of the equations
and tangent to the stable eigenspace of $DF(O_1)$. It thus coincides locally with and contains $W^s(O_1)$.

Next note that the unstable eigenspace has no non-zero components in the coordinate directions $x_4, x_5, \ldots, x_n $
and that the hyperplane where they are zero is invariant under the flow. It thus follows that the $x_4, x_5, \ldots, x_n $
coordinates of the unstable manifold are all zero. Thus the unstable manifold $W^u(O_1)$ is given locally in the
$\delta$-neighborhood of $O_1$ as the graph  $x_1 = \sigma_1 + h^u_1(x_2,x_3)$ where $h(0,0) = 0$ and
$h$ is smooth. Consider the local change of variables that ``straightens out" the unstable manifold:
\begin{equation}\label{coordchange}
   X_1 = x_1 -\sigma_1 - h(x_2,x_3).
\end{equation}
Under this smooth change of coordinates \eqref{eq:vectorfield} becomes:
\begin{equation}
\begin{split}
\dot{X}_1 & = - \sigma_1  X_1 - f(X_1,x_2, \ldots, x_n) \\
\dot{x}_2 & = x_2 \left( \sigma_2 - x_2 - \rho_{21}(X_1+\sigma_1+h(x_2,x_3)) - \rho_{23} x_3 - \rho_{24} x_4 - \cdots - \rho_{2n} x_n \right) \\
\dot{x}_3 & = x_3 \left( \sigma_3 - x_3 - \rho_{31}(X_1+\sigma_1+h(x_2,x_3)) - \rho_{32} x_2 - \rho_{34} x_4- \cdots - \rho_{3n} x_n \right) \\
\dot{x}_4 & = x_4 \left( \sigma_4 - x_4 - \rho_{41}(X_1+\sigma_1+h(x_2,x_3)) - \rho_{42} x_2 - \rho_{43} x_3- \cdots - \rho_{4n} x_n \right) \\
                &  \vdots \\
\dot{x}_n & = x_n \left( \sigma_n - x_n - \rho_{n1}(X_1+\sigma_1+h(x_2,x_3)) - \rho_{n2} x_2 - \rho_{n3} x_3- \cdots - \rho_{n,n-1} x_{n-1} \right).
\end{split}
\end{equation}
Since $W^s(O_1)$ is contained in the coordinate hyperplane, we do not need a change of variables corresponding to $W^s$. 
Noting the $f(0,x_2, \ldots, x_n) \equiv 0$, we use the MVT to define a new function $\frac{f(X_1,x_2,\ldots, x_n) - 0}{X_1 - 0}=f_1(X_1,x_2,\ldots,x_n)$, and rewrite the equations to obtain:
\begin{equation}\label{neweqns}
\begin{split}
\dot{X}_1 & = -\sigma_1 X_1  + f_1(X_1,x_2, \ldots, x_n) X_1 \\
\dot{x}_2 & = x_2 \left( \sigma_2 - \rho_{21}\sigma_1- x_2 - \rho_{21}(X_1+h(x_2,x_3)) - \rho_{23} x_3 - \rho_{24} x_4 - \cdots - \rho_{2n} x_n \right) \\
\dot{x}_3 & = x_3 \left( \sigma_3 - \rho_{31}\sigma_1- x_3 - \rho_{31}(X_1+h(x_2,x_3)) - \rho_{32} x_2 - \rho_{34} x_4- \cdots - \rho_{3n} x_n \right) \\
\dot{x}_4 & = x_4 \left( \sigma_4 - \rho_{41}\sigma_1- x_4 - \rho_{41}(X_1+h(x_2,x_3)) - \rho_{42} x_2 - \rho_{43} x_3- \cdots - \rho_{4n} x_n \right) \\
   \vdots  & = \vdots 
\end{split}
\end{equation}

Now note that distances from $W^s(O_1)$ and $W^u(O_1)$ are not effected by the coordinate change \eqref{coordchange}. If we 
now denote $\xi = (X_1, x_4, \ldots, x_n)$ then $|\xi|$ is the distance to the unstable manifold and $|\eta|$ is the distance
to the stable manifold (in the sup norm).

It now follows immediately from \eqref{neweqns} that inside the $\delta$ neighborhood of $O_1$ 
the unstable directions satisfy the estimates: 
$$
   \dot{x}_2 \le x_2 (\sigma_2 - \rho_{21}\sigma_1 - e^u)  \quad \text{ and } \quad  \dot{x}_3 \le x_3 (\sigma_3 - \rho_{31}\sigma_1 - e^u),
$$
where we can take $e^u >0$ arbitrarily small by starting with $\delta$ and $\epsilon$ small. 
Thus, by a simple application of Gronwall's inequality, solutions starting on $S_0$ and remaining 
in our $\delta$ neighborhood of $O_1$ must satisfy:
$$
     | \eta(t) | \le | \eta(0) | e^{(\lambda^{uu} -e^u) t},
$$
where $\lambda^{uu}$ is the maximal unstable eigenvalue, i.e.,
$$
    \lambda^{uu} = \max_{j=2,3} \{  \sigma_j - \rho_{j1}\sigma_1 \}.
$$
Thus if $T$ is defined by $|\eta (T)| = \delta$, then
$$
     T \ge\frac{1}{\lambda^{uu} - e^u} \ln(\frac{\delta}{\eta(0)}).
$$

Similarly, using Gronwall's inequality once again we obtain:
$$
   | \xi (t) | \le | \xi(0)| e^{(\lambda^{ls} + e^s) t },
$$
where $\lambda^{ls}$ is the ``leading" stable eigenvalue, i.e. since
the eigenvalues are real, the negative eigenvalue with the smallest absolute value.
In terms of our parameters:
$$
   \lambda^{ls} = \max \{ -\sigma_1,  \{ \sigma_j - \rho_{j1}\sigma_1, j=4,\ldots, n \}  \}.
$$
Thus
$$
     | \xi(T) | \le C(\delta) | \eta(0) |^{(\nu -e)}
 $$
 where $e$ can be taken arbitrarily small.
 By  the assumption that $O_1$ is dissipative, $\nu > 1$ and so we can make $\nu-e>1$.
 
\end{proof}


\subsection{Stable Heteroclinic Surface}
\label{stablesurface}
We are now in a position to prove that the union of the unstable 
manifolds of our system, restricted to the positive orthant, form an asymptotically stable forward invariant set under appropriate parameter restrictions.

\begin{thm*}[Theorem~\ref{maintheorem}, restated]
Suppose that inequalities \eqref{ew_unstable}, \eqref{ew_stable} and \eqref{dis} hold for each $k$, $1 \le k \le p$ and that each unstable manifold $\widetilde{W}^u(O_k)$ is contained 
in a compact forward invariant set as in Section~\ref{section:positive_invarian}.
Then $\Gamma \equiv \bigcup_{k=1}^p (\widetilde{W}_k^u \cup O_k)$ is asymptotically stable.
\end{thm*}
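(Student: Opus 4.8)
The plan is to prove asymptotic stability by establishing the two standard ingredients separately: that $\Gamma$ is Lyapunov stable and that it attracts a full neighborhood. The organizing idea is to decompose the forward orbit of any point near $\Gamma$ into alternating \emph{local} segments, lying inside the $\delta$-neighborhoods $V_k$ of the equilibria, and \emph{global} segments, lying in the complement of $\bigcup_k V_k$ while shadowing one of the heteroclinic connections. I would track a single scalar quantity, $d(t) = \mathrm{dist}(\phi^t(x),\Gamma)$, and show that each local-then-global passage contracts the value of $d$ measured at the entrance sections $S_0^k$.

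First I would treat a local segment at an equilibrium $O_k$. Using the coordinates of Section~\ref{section:local}, a point entering $V_k$ on $S_0^k$ has distance to $W^s(O_k)$ equal to $|\eta(0)|$ and distance to $\widetilde{W}_k^u \subset \Gamma$ equal to $|\xi| = \delta$. The nearest part of $\Gamma$ at the entrance section lies along the incoming connection, inside the coordinate hyperplane $\{\eta = 0\}$ that contains $W^s(O_k)$ (the outgoing unstable manifold is at distance $\approx \delta$ and hence is not the nearest part of $\Gamma$), so that $|\eta(0)| \le d_{\mathrm{in}}$, where $d_{\mathrm{in}}$ is the entry distance to $\Gamma$. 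Theorem~\ref{ConvergenceToManifold} then gives, at the exit section $S_1^k$, the bound $d_{\mathrm{out}} \le |\xi(T)| \le C\,|\eta(0)|^{\nu-e} \le C\, d_{\mathrm{in}}^{\,\nu-e}$ with $\nu - e > 1$. This is the crucial super-linear contraction toward $\Gamma$ produced by dissipativity.

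Next I would treat a global segment. On the exit section the orbit lies within $d_{\mathrm{out}}$ of $\widetilde{W}_k^u$, and the corresponding reference orbit on $\Gamma$ flows, by Theorem~\ref{3D} (and its perturbation to the full system where the remaining coordinates are small), into the next neighborhood $V_{k+1}$ or $V_{k+2}$ in uniformly bounded time $\tau$, since the vector field is bounded away from zero on the compact global region. Continuous dependence on initial conditions then gives an amplification bounded by a fixed factor $K = e^{L\tau}$, so the orbit re-enters the next neighborhood on its section $S_0$ with entry distance $d_{\mathrm{in}}' \le K\, d_{\mathrm{out}}$. Composing the two estimates over one full passage yields $d_{\mathrm{in}}' \le C K^{\nu-e}\, d_{\mathrm{in}}^{\,\nu-e}$. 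Writing $\beta = \nu - e > 1$, the sequence of entry distances $d_m$ obeys $d_{m+1} \le (CK^{\beta})\, d_m^{\beta}$, which forces $d_m \to 0$ super-geometrically whenever $d_0$ is below the threshold $(CK^{\beta})^{-1/(\beta-1)}$; this gives attraction. Since the distance between consecutive section crossings is amplified by at most $K$ and the $d_m$ are decreasing, $\sup_{t} d(t) \le K d_0 \to 0$ as $d_0 \to 0$, which gives Lyapunov stability, while forward invariance of the box $B$ of Lemma~\ref{box} keeps orbits from escaping.

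I expect the main obstacle to be making the global segment rigorous in the full $n$-dimensional system: Theorem~\ref{3D} is proved only for the exact three-dimensional restriction, so one must show that when the remaining $n-3$ coordinates are small but nonzero the same funneling toward the next equilibrium persists, and that the amplification factor $K$ and the transit time $\tau$ can be chosen uniformly over all connections and over both branches ($O_{k+1}$ and $O_{k+2}$) of the network. Closely related is the bookkeeping at the local/global junctions, namely verifying the inequalities $|\eta(0)| \le d_{\mathrm{in}}$ and $d_{\mathrm{out}} \le |\xi(T)|$ uniformly, so that the super-linear local contraction provably dominates the bounded global amplification and the composite return map is a genuine contraction on a full neighborhood of $\Gamma$ rather than merely along the cycle.
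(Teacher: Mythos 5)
Your proposal follows essentially the same route as the paper's proof: alternate local passages (contracted super-linearly via Theorem~\ref{ConvergenceToManifold} and the dissipativity exponent $\nu-e>1$) with global passages (amplified by at most a fixed Lipschitz factor obtained from continuous dependence and bounded transit time), and compose these to obtain a contraction of the successive entry distances to $\Gamma$. The obstacles you flag---uniformity of the perturbation of Theorem~\ref{3D} to the full system, and the junction inequality $|\eta(0)|\le d_{\mathrm{in}}$, which is delicate because near $O_k$ the incoming sheet $\widetilde{W}_{k-1}^u$ is not contained in $\{\eta=0\}$---are exactly the points the paper's own proof also treats lightly, via its case (i)/(ii) split and an appeal to smooth dependence on initial conditions.
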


We consider the role each inequality of the hypothesis plays in the theorem.  The inequalities \eqref{ew_unstable} and \eqref{ew_stable} state that 
\begin{gather*}
0  <  \min_{i=1,2} \{ \sigma_{k+i}-\rho_{k+i,k}\sigma_k \} \, (\text{indices mod } p), \textrm{ for each }1 \le k \le p, \textrm{ and }\\
\lambda_j =  \sigma_j-\rho_{jk}\sigma_k<0, \quad \text{ for } j \ne k, k+1, k+2 \mod p,\textrm{ for each } 1 \le k \le p.
\end{gather*}

These inequalities are fundamental to the problem we are considering.  They ensure that there is a heteroclinic channel from each equilibrium $O_k$ to the equilibria $O_{k+1}$ and $O_{k+2}$ (the first inequality), and that there are no heteroclinic channels to the other equilibria (the second inequality).

The inequality \eqref{dis} states that for each $k$,\begin{equation*}
  \max_{i=1,2} \{ \sigma_{k+i}-\rho_{k+i,k}\sigma_k  \} 
        < \min_{j \neq k, k+1, k+2}  \{ |\sigma_j - \rho_{jk}\sigma_k|, \sigma_k \} \quad \text{ (indices mod $p$)}.
\end{equation*}  This is the dissipativity condition.  Informally, it may be taken to say that when a trajectory that is close to one of the unstable manifolds whose union is $\Gamma$ passes near one of the saddle fixed points, the distance of the trajectory to $\Gamma$ contracts exponentially.

It will become clear, from the proof of Theorem~\ref{maintheorem}, that \eqref{dis} is a much stronger condition than is necessary.  It ensures not only asymptotic stability, but a sort of monotonic asymptotic stability, such that whenever the trajectory passes near some $O_k$, its distance to $\Gamma$ contracts exponentially.  If, on the other hand, \eqref{dis} held for some, but not all, values of $k$, then the trajectory would at times contract exponentially towards $\Gamma$, and at other times drift away from $\Gamma$, and stability would depend on how these attractive and repulsive forces average over time.  Attempting to formulate a replacement condition for \eqref{dis} that is necessary as well as sufficient is extremely nontrivial.

The final hypothesis, that each unstable manifold is contained in a compact forward invariant set, is necessary.  We have seen one set of inequalities that ensure that such sets exist, \begin{equation*}
\frac{\sigma_{k+1}}{\rho_{k+1,k}} \le \frac{\sigma_{k+2}}{\rho_{k+2,k}}  \quad \text{ (indices mod $p$)},
\end{equation*} which are sufficient but not necessary.  In Section~\ref{parsers}, we see that there is a nonempty open region of parameter space where the hypotheses of Theorem~\ref{maintheorem} hold.  From our current discussion, we see that the theorem applies to a larger region.

We recall that $\Gamma$ is asymptotically stable if given any neighborhood $U$ of  $\Gamma$ 
(restricted to $\overline{\mathbb{R}_+^n})$ 
there exists an $\epsilon$-neighborhood of $\Gamma$, say $V_\epsilon (\Gamma) \subset 
\overline{\mathbb{R}_+^n}$, such that if $x_0 \in V_\epsilon (\Gamma)$ then 
$x(t,x_0) \in U$ for $t >0$ and 
$\lim_{t \rightarrow \infty} \text{dist}(x(t,x_0),\Gamma)=0$, 
where $x(t,x_0)$ is the solution of the initial value problem \eqref{eq:vectorfield} with $x(0,x_0)=x_0$.  When we speak of an open 
$\epsilon$-neighborhood of $\Gamma$, we are speaking of a set that is open in the subspace 
topology.  That is, an $\epsilon$-neighborhood in $\overline{\mathbb{R}_+^n}$ is an 
$\epsilon$-neighborhood in $\mathbb{R}^n$ intersected with $\overline{\mathbb{R}_+^n}$.

\begin{proof}[Proof of Theorem~\ref{maintheorem}]
For each $k=1,...,p$, let $V(O_k)$ be a sufficiently small $\delta$ neighborhood of $O_k$, such 
that Theorem~\ref{ConvergenceToManifold} can be applied within each $V(O_k)$ and $\delta$ 
does not depend on $k$.

Let $z_0$ be a representative point at an initial condition $\epsilon$-close to $\Gamma$.  
Choose $\epsilon$ such that $\epsilon<\delta$.  Then we can classify the dynamics as either 
local if $z_0 \in V(O_k)$ for some $k$, and global otherwise.

We observe that if $z_0=O_k$ for any $k$, its behavior is trivial, and likewise, if $z_0$ lies 
on a coordinate plane, it remains on that plane while converging exponentially to some $O_k$.  
We therefore assume without loss of generality that neither of these cases hold.

Suppose that $z_0 \notin V(O_k)$ for any $k$.  The point is $\epsilon$-close to $\Gamma$, 
and since $\Gamma$ is a finite union, we can say that $z_0$ is $\epsilon$-close to 
$\widetilde{W}_\alpha^u$, where $\alpha$ is fixed and depends on $z_0$.  Consider the projection 
of the system onto the three-dimensional subspace spanned by the axes $x_\alpha$, $x_{\alpha+1}$, 
and $x_{\alpha+2}$.  In three dimensions, the specific route a solution takes has not been important; 
a trajectory in the invariant set may go straight to a neighborhood of $O_{\alpha+2}$, 
or it may detour to $O_{\alpha+1}$, but the net result is the same (Theorem~\ref{3D}).  
We now formally differentiate between these two cases.

We consider two cases: either the positive semitrajectory of $z_0$ intersects $V(O_{\alpha+2})$ without first intersecting $V(O_{\alpha + 1}$) (case (i)), or the positive semitrajectory intersects $V(O_{\alpha+1})$, then intersects $V(O_{\alpha+2})$ (case (ii)).

Before proceeding, we recall the definitions of $S_0$ and $S_1$ given in Theorem~\ref{ConvergenceToManifold}, 
and similarly define such sections $S_0^q, S_1^q$ for $1 \le q \le p$.  Without loss of generality, we 
assume that $z_0 \in S_1^1$.

Suppose that case (i) occurs.  
For each $k=1,...,p$, let $\hat{V}(O_k)$ be the projection of $V(O_k)$ into the three dimensions spanned by 
$x_k, x_{k+1}, \text{ and }x_{k+2}$; note that $x_k$ is negligable for $k \ne \alpha, \alpha+1, \text{ and }\alpha+2$.  
Then the projection of the orbit onto $\mathbb{R}^3$ intersects $\hat{V}(O_{\alpha+2})$ before it can intersect 
$\hat{V}(O_{\alpha+1})$.  In $\mathbb{R}^3$, we know that all trajectories inside of $\widetilde{W}_{\alpha}^u$ 
that do not intersect $\hat{V}(P_{\alpha+1})$ come to a neighborhood of $O_{\alpha+2}$ in bounded time, where the bound 
does not depend on the initial condition.  We may consider the non-projected, full-dimensional space as 
a ``perturbation" of the projected space, and cite smooth dependence of initial conditions; the trajectory going through a slightly perturbed initial point corresponding to such a case must enter $V(O_{\alpha+2})$ in a 
well-behaved way.  In particular if $z_0$ belongs to $S_0 ^\alpha$ then a mapping from a neighborhood of $z_0$ on $S_0 ^\alpha$ to $S_0^{\alpha +2}$ is well defined and Lipschitz-continuous.

Suppose that case (ii) occurs.
Then once an orbit of $z_0$ enters $ V(O_{\alpha+1})$, it starts to manifest the dissipative behavior.  
In particular, if $d(x, W_{\alpha}^u)<\epsilon$, then after passing through 
$V(O_{\alpha+1})$, $d(\widetilde{x}, W_{\alpha+1}^u)<C\epsilon^\nu$, by Theorem~\ref{ConvergenceToManifold}.  
Once the representative point leaves  $V(O_{\alpha+1})$, we may apply (i), viewing its position after leaving 
the neighborhood as an initial condition that does not re-enter the neighborhood $V(O_{\alpha+1})$.  
Thus when the representative point finally enters $V(O_{\alpha+2})$, its distance to $\Gamma$ has been 
contracted by an order of $C\epsilon^{\nu_{\alpha+1} - e_{\alpha+1}}$, where $1<\nu_{\alpha+1} - e_{\alpha+1}$ and C absorbs both the constant $C$ from 
Theorem~\ref{ConvergenceToManifold} and a Lipschitz constant.

Suppose now that $z_0 \in V(O_k)$, where $k$ is now fixed.  Then the trajectory leaves $V(O_k)$ without 
increasing its distance from the unstable manifold, and passes into $V(O_{k+1})$ as just described.  
We may then apply Theorem~\ref{ConvergenceToManifold}.  As the trajectory passes through $V(O_k)$, 
its distance from the unstable manifold is contracted on an order of $\epsilon^{\nu_k - e_k}$.

Since the mapping contracts in the global dynamics and is Lipschitz (or contracting) in the local dynamics, 
simple inductions yields that as a representative point moves through the system, its distance from the 
manifold changes from $\epsilon$ to $c_1\epsilon^{\nu_1 - e_1}$ to $c_2\epsilon^{(\nu_1 - e_1)(\nu_2 - e_2)}$, and so on.  

For a fixed $i$, the value $\nu_i$, representing a ratio of eigenvalues, is likewise fixed.  The value $e_i$ is not; it depends on the distance between the representative point and the stable manifold as the trajectory enters $V(O_i)$, which changes from one instance to the next.  For a given $i$, however, there is some maximal value that $e_i$ can take, since the system is constantly contracting towards the manifold and $e_i$ goes to $0$ along with that distance.  Thus there exists a global value, $1<\nu < \nu_i - e_i$ for all $i$ and all $e_i$, such that passing from the first to the $p$-th unstable manifold is a contraction 
of order $c\epsilon^{\nu^p}$.
\end{proof}


\subsection{Existence of Parameter Sets}\label{parsers}

Throughout the paper, we have put a number of restrictions on the parameters of the system.  
One must ask whether the specified inequalities may be satisfied.
\begin{lem}
\label{lem:parameter_set}
There are sets of positive parameters values $\{ \sigma_i \}$, $i=1, \ldots, n$, and $\{ \rho_{jk} \}$, $j,k=1, \ldots, n$,
with non-empty interior such that the inequalities \eqref{ew_unstable}, 
\eqref{ew_stable}, \eqref{dis} and  \eqref{P23} hold.
\end{lem}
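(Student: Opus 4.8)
The plan is to exhibit an explicit family of parameters and verify that all four inequality systems hold, while arguing that the verification is by strict inequalities so that a full-dimensional (non-empty interior) set of parameters works. The cleanest route is to exploit the cyclic symmetry of the problem: I would first seek a fully symmetric solution in which $\sigma_i = \sigma$ is constant in $i$, and in which the interaction coefficients $\rho_{jk}$ depend only on the offset $(j-k) \bmod p$. Writing $\rho_{jk} = r_{(j-k \bmod p)}$, the diagonal assumption $\rho_{ii}=1$ forces $r_0 = 1$, and every eigenvalue $\sigma_j - \rho_{jk}\sigma_k = \sigma(1 - r_{j-k})$ then depends only on the offset. Under this reduction, the inequality systems \eqref{ew_unstable}, \eqref{ew_stable}, \eqref{dis}, \eqref{P23} collapse from $p$-fold conditions into a single set of constraints on the finitely many offset parameters $r_1, r_2, \ldots, r_{p-1}$, which is far easier to satisfy and analyze.

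Next I would translate each inequality into these offset variables. The unstable condition \eqref{ew_unstable} becomes $r_1 < 1$ and $r_2 < 1$ (so the offset-1 and offset-2 couplings are weak, giving positive unstable eigenvalues $\sigma(1-r_1), \sigma(1-r_2) > 0$). The stable condition \eqref{ew_stable} becomes $r_m > 1$ for all offsets $m \neq 0,1,2 \pmod p$ (strong couplings, negative eigenvalues). The domination condition \eqref{P23}, written as $\sigma_{k+1}/\rho_{k+1,k} \le \sigma_{k+2}/\rho_{k+2,k}$, reduces to $1/r_1 \le 1/r_2$, i.e.\ $r_2 \le r_1$; I would take this strict, $r_2 < r_1 < 1$. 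Finally the dissipativity condition \eqref{dis} becomes
\begin{equation*}
  \sigma(1 - r_2) < \min\Bigl\{ \min_{m \neq 0,1,2} \sigma|1 - r_m|,\ \sigma \Bigr\},
\end{equation*}
where I have used $\max_{i=1,2}\sigma(1-r_{i}) = \sigma(1-r_2)$ since $r_2 < r_1$. Cancelling $\sigma$, this asks that the strongest unstable gap $1 - r_2$ be smaller than both $1$ and every $|1 - r_m| = r_m - 1$ for the remaining offsets.

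The construction then reduces to choosing the offset numbers to satisfy these simultaneously. I would pick $r_1, r_2$ both just below $1$ with $r_2 < r_1$, say $r_1 = 1 - a$ and $r_2 = 1 - b$ with $0 < a < b$ and $b$ small; this makes the maximal unstable gap $1 - r_2 = b$ arbitrarily small. For each remaining offset $m$ I would set $r_m = 1 + c_m$ with $c_m$ bounded away from $0$, making $r_m > 1$ (verifying \eqref{ew_stable}) and $|1 - r_m| = c_m$ bounded below by some fixed $c_{\min} > 0$. Then the dissipativity inequality reads $b < \min\{c_{\min}, 1\}$, which holds once $b$ is chosen small enough. Thus all four systems hold simultaneously, and since every inequality is strict, the point $(\sigma; r_1, \ldots, r_{p-1})$ has an open neighborhood in the symmetric parameter space on which they continue to hold; pulling this back through the (linear, hence open) embedding of symmetric parameters into the full $\{\sigma_i\} \cup \{\rho_{jk}\}$ space, and perturbing off the symmetric locus, yields a full set with non-empty interior in the ambient parameter space.

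The main obstacle, and the one point demanding care, is the compatibility of \eqref{dis} with \eqref{ew_stable}: dissipativity forces the unstable gap $\sigma(1-r_2)$ below $\sigma \cdot |1 - r_m|$ for the strongly-coupled offsets, so I must keep the \emph{weak} couplings $r_1, r_2$ uniformly close to $1$ while simultaneously keeping the \emph{strong} couplings $r_m$ uniformly bounded away from $1$ on the other side. In the symmetric reduction these are independent parameters, so there is no conflict and the choice above succeeds cleanly; I would verify explicitly that no offset index collision occurs, i.e.\ that for the relevant small $p$ the residues $\{0,1,2\}$ are genuinely distinct mod $p$ (which requires $p \ge 3$; the edge cases $p=1,2$ would be noted separately or excluded as degenerate). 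A secondary check is simply confirming that the symmetric ansatz is consistent with $\rho_{ii}=1$ and with positivity of all $\sigma_i$ and $\rho_{jk}$, both of which are immediate from $r_0 = 1$ and the choices $\sigma > 0$, $r_m > 0$.
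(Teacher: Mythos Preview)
Your proposal is correct and follows essentially the same approach as the paper: both exploit the fully symmetric ansatz $\sigma_i \equiv \sigma$ with $\rho_{jk}$ depending only on the offset $j-k \bmod p$, reduce the four inequality families to the conditions $r_2 < r_1 < 1 < r_m$ (for $m \ne 0,1,2$) together with a gap condition for dissipativity, and then invoke strictness of the inequalities to pass to an open set; the paper additionally precedes this with a more abstract sequential consistency check before specializing to the symmetric example. One terminological slip worth fixing: the embedding of the symmetric parameter locus into the full parameter space is \emph{not} an open map (it has strictly smaller dimension), but your real argument---that each inequality is a strict, continuous condition on the full collection $\{\sigma_i,\rho_{jk}\}$, hence defines an open subset of the ambient space, so any symmetric point satisfying all of them lies in a full-dimensional open region---is the right one and is what you invoke when you say ``perturbing off the symmetric locus.''
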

\begin{proof}
First note that given $\{ \sigma_i \}$ the inequalities \eqref{ew_unstable} and 
\eqref{ew_stable} are completely uncoupled and all trivially have positive solutions $\{ \rho_{jk} \}$.
For each $k$ and $i=1,2$ these are:
\begin{equation}\label{rhojk}
  \rho_{jk}  < \frac{ \sigma_j}{\sigma_k}
\end{equation} 
We note that with these inequalities the stable eigenvalues may be freely chosen to take any negative
value and the unstable eigenvalues any positive values. Since the restrictions 
\eqref{dis}  concern only relative orderings of those eigenvalues at each $O_k$ ($k$ fixed), it is clear
that \eqref{dis} is satisfied for open subsets of the previously chosen sets.

The final remaining inequality \eqref{P23} concerns on $\rho_{k+1,k}$ and $\rho_{k+2,k}$ and so
to finish the proof we need only to consider whether this restriction on those values is consistent with
the previous restrictions on those parameters, namely \eqref{ew_unstable} and \eqref{dis}, but not
\eqref{ew_stable}. Note that the inequalities in \eqref{ew_unstable} can be written as:
$$
    \frac{ \sigma_{k+1} }{ \rho_{k+1,k } } > \sigma_k  \quad 
    \text{ and }  \quad \frac{ \sigma_{k+2} }{ \rho_{k+2,k }} > \sigma_k.
$$
The constraint \eqref{P23} only requires that 
$$  
\frac{ \sigma_{k+1} }{ \rho_{k+1,k } }  \le \frac{ \sigma_{k+2} }{ \rho_{k+2,k} }.
$$
There is clearly no inconsistency in these inequalities.
The final inequalities \eqref{dis} involve each $\rho_{k+1,k}$ and $\rho_{k+2,k}$
independently of the others. First they require that for each $k$ and $i=1,2$ 
$$
    \sigma_k - \rho_{k+1,k} \sigma_k   < \sigma_k.
$$
This is equivalent to 
$$
   \frac{ \sigma_{k+i}}{\sigma_k} -1 < \rho_{k+1,k}
 $$
 which can clearly be satisfied along with \eqref{rhojk}.
If $\{ \rho_{k+i,k} \}$, $i=1,2$ have already been chosen to satisfy \eqref{ew_unstable}
and \eqref{P23}, then values for $\rho_{jk}$ can be chosen to satisfy \eqref{dis} by
simply choosing them sufficiently large, i.e., for each $j,k$, $j \neq k,k+1,k+2$, $\rho_{jk}$
must satisfy: 
$$
     \sigma_{k+1} - \rho_{k+i,k} \sigma_k < \rho_{jk} \sigma_k - \sigma_j,   \quad i=1,2.
$$
This can be rewritten as:
$$
\rho_{jk} >   \frac{\sigma_{k+i} + \sigma_j}{\sigma_k} - \rho_{k+i,k} ,  \quad i=1,2.
$$
Thus $\rho_{jk}$ can take any value greater than the maximum of these two values.
Previously we had only required (in \eqref{ew_stable}) that these parameters satisfy:
$$
    \rho_{jk} > \frac{\sigma_j}{\sigma_k}.
$$
Thus, with all other choices of consistent parameters, any large enough $\rho_{jk}$, will 
also be consistent.
\end{proof}

For example if $\sigma_i = \sigma$ are all the same, 
then \eqref{ew_unstable}, \eqref{ew_stable} and \eqref{P23} are satisfied if simple :
$$
0 < \rho_{k+2,k}  \le  \rho_{k+1,k} <1
$$
and
$$
\rho_{jk}  > 1,   \quad \text{ for } j \neq k, k+1,k+2.
$$
The dissipative requirement \eqref{dis} will be satisfied if further:
$$
2 \sigma - \rho_{k+i,k} < \rho_{jk} ,  \quad \text{ for } i=1,2 \text{ and } j \neq k,k+1,k+2.
$$

For instance, the parameter values: $\sigma_i = 1$, $\rho_{k+1,k} = .9$,  $\rho_{k+2,k} = .8$
and $\rho_{jk} =  1.3$ for  $j \neq k,k+1,k+2$, (indices mod $p$)  strictly satisfy all
of the inequalities.


\section{Conclusion}

We proved in the paper that under some conditions the generalized Lotka-Volterra system admits a two-dimensional  
attractor that consists of saddles and the unstable manifolds joining them into a heteroclinic system. 
Thus, trajectories inside the attractor manifest 
completely regular features. However, behavior of wandering trajectories in the basin of the attractor could be treated as 
weakly chaotic. Indeed, one can introduce an oriented graph with vertices identified with the saddle equilibrium points $O_k$ 
and edges identified with heteroclinic trajectories joining $O_k$ and $O_{k+1}$ (belonging to the coordinate plane $P_{k,k+1}$) or 
$O_k$ and $O_{k+2}$ (belonging to the plane $P_{k,k+2}$). It is possible to show that for each finite path through this 
graph there exists an open set of initial points in the basin
 such that the trajectory going through any of these points follows the corresponding heteroclinic trajectories. 
 The number of paths grows exponentially with the length, so the number of pieces of trajectories with different 
 behavior (in fact all of them 
are ($\epsilon$, $T$) separated for some values of $\epsilon$ and $T$)  grows as $T \rightarrow \infty$, the metric 
complexity function grows with time. 
A similar effect has been observed in \cite{masterslave}, where it 
was called weak transient chaos. We intend to describe its properties in another publication.

In the context of the motivating application, functional 
sequential dynamics in neural networks, the two-dimensional heteroclinic 
attractor $\Gamma$ in the phase space of dynamical 
system \eqref{eq:vectorfield} may be thought of as a mathematical image 
of diverse sequential dynamics based on the parallel performance of not one but two different 
modalities. Many interesting applications of this may be found in cognitive 
science. For example, the learning and performing of sensory-motor human 
behaviors in many situations require the integration or binding of the 
sequential stimuli of one modality with the sequential stimuli of another. 
This seems to be the case in one of the most important cognitive functions: 
sequential working memory. In performance of a cross-modal working memory task, 
there may be two 
sequentially discrete neural processes (different chains of metastable 
states) that represent simultaneous neural activities corresponding to 
cross-modal transfer of information in the working memory (see e.g. \cite{ohara}).

ACKNOWLEDGEMENT. V.A. was partially supported by the grant RNF 14-41-00044 of the Russian Science Foundation during his stay at Nizhny Novgorod University,
and by a Glidden Professorship Award during his stay at Ohio University.  The authors thank M. I. Rabinovich for bringing to their attention the problems considered in this manuscript.  We thank the referees for their comments and insight.



\appendix
\section{The Topological Form of $\Gamma$}

Theorem~\ref{formofgamma} states that $\Gamma$ depends on the parity of $p$; 
this is because the number of connected components in the boundary of $\Gamma$ depends on the parity of $p$.

\begin{prop}
\label{connectedcomponents}
If $p$ is even, then the boundary $\partial \Gamma$ has two connected components. 
If $p$ is odd, then $\partial \Gamma$ has one component.
\end{prop}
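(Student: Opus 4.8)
The plan is to present $\Gamma$ as a gluing of $p$ triangular disks and to read $\partial\Gamma$ off the gluing pattern, reducing the count of components to an orbit problem for the shift $k\mapsto k+2$ on $\mathbb{Z}/p\mathbb{Z}$.

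First I would identify the closure $T_k:=\overline{\widetilde{W}^u_k}$ of each unstable manifold with a topological triangle. Two of its sides issue from $O_k$: the heteroclinic arc $\alpha_k$ joining $O_k$ to $O_{k+1}$ inside the invariant coordinate $2$-plane $P_{k,k+1}$, and the arc $\beta_k$ joining $O_k$ to $O_{k+2}$ inside $P_{k,k+2}$; these are the two edges of $W^u(O_k)$ that persist in the invariant coordinate planes. By Theorem~\ref{3D}, every orbit in the interior of $\widetilde{W}^u_k$ limits on $O_{k+2}$, so passing to the closure adds exactly one further edge, the limiting heteroclinic arc $\alpha_{k+1}$ from $O_{k+1}$ to $O_{k+2}$. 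Thus $\partial T_k$ is the triangle $O_k\to O_{k+1}\to O_{k+2}\to O_k$ with sides $\alpha_k,\alpha_{k+1},\beta_k$.

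Next I would record the identifications made when the $T_k$ are assembled into $\Gamma$. The arc $\alpha_k\subset P_{k,k+1}$ is at once a genuine side of $T_k$ and the limiting side $\alpha_{(k-1)+1}$ of $T_{k-1}$, so each $\alpha$-edge lies in exactly two triangles and is interior to $\Gamma$. The arc $\beta_k\subset P_{k,k+2}$ is a side of $T_k$ only: among all sides of all $T_j$ the sole ``$+2$'' heteroclinics are the $\beta_j$, and $\beta_j=\beta_k$ forces $j=k$. Hence the free edges are exactly the $\beta_k$, and $\partial\Gamma=\bigcup_{k=1}^{p}\beta_k$. The local picture confirms this: precisely the three triangles $T_{k-2},T_{k-1},T_k$ contain $O_k$ as a vertex, and they meet there in a fan whose two interior (glued) edges are $\alpha_{k-1},\alpha_k$ and whose two extreme (free) edges are $\beta_{k-2}$, arriving at $O_k$, and $\beta_k$, leaving $O_k$. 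In particular $O_k\in\partial\Gamma$, and near $O_k$ the boundary is the single arc $\beta_{k-2}\cup\beta_k$.

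Finally I would count components. The boundary is a disjoint union of closed curves, each a concatenation of arcs $\beta_k$ in which $\beta_k$ meets $\beta_{k+2}$ at the shared endpoint $O_{k+2}$; hence the number of curves equals the number of orbits of $k\mapsto k+2$ on $\mathbb{Z}/p\mathbb{Z}$, which is $\gcd(2,p)$. This gives a single orbit of length $p$, hence one boundary component, when $p$ is odd, and two orbits, on the even- and odd-indexed vertices, hence two boundary components, when $p$ is even. I expect the main obstacle to be the first step: rigorously establishing that $\overline{\widetilde{W}^u_k}$ is a disk bounded by exactly those three heteroclinic arcs---that the closure contributes the limiting edge $\alpha_{k+1}$ and nothing further, and that the fan at each $O_k$ has the stated combinatorics. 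This is the delicate topological input, and it rests on Theorem~\ref{3D} together with the invariance of the coordinate planes.
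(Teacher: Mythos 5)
Your proposal is correct and follows essentially the same route as the paper: both identify the boundary of $\Gamma$ with the union of the $O_k\to O_{k+2}$ heteroclinic arcs and count components via the orbits of $k\mapsto k+2$ on $\mathbb{Z}/p\mathbb{Z}$, i.e.\ $\gcd(2,p)$. Your write-up is in fact more careful than the paper's, which simply asserts which trajectories lie in which boundary component; your explicit verification that each $\alpha$-edge is shared by exactly two triangles while each $\beta$-edge belongs to exactly one (so that $\partial\Gamma=\bigcup_k\beta_k$) supplies the justification the paper defers to its triangulation results in the rest of the appendix.
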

\begin{proof}
If $p$ is even, then one connected component of the boundary will include the 
trajectories connecting the saddles $O_n$ where $n$ is even, and another will 
include the trajectories connecting the saddles $O_m$, where $m$ is odd.

If $p$ is odd, then as in the previous case, the saddles $O_n$ where $n$ is even 
are contained in the same component, and the saddles $O_n$ where $n$ is odd are 
likewise contained in a single component. Furthermore, $O_{p-1}$ and $O_1$ are 
contained in the same component, where $p-1$ is even, since $O_1 = O_{p+1}$. 
Thus all saddles $O_n$ are contained in the same component.
\end{proof}

Figure~\ref{parityfigure} will help visualize the situation.

\begin{figure}[H]
\centering
\includegraphics[width=50mm]{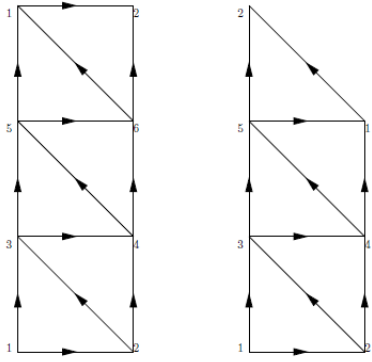}
\caption{A diagram representing $p=6$ (left) and $p=5$ (right). 
We clearly observe two distinct boundaries for $p=6$, and one for $p=5$.}
\label{parityfigure}
\end{figure}

It appears from the diagram that when $p$ is even, $\Gamma$ is a cylinder, and when $p$ is odd, 
$\Gamma$ is a M\"{o}bius strip. We will formalize that intuition. 
We first observe that the components whose union is the unstable manifold $\Gamma$ are not literally triangles, 
as they are depicted in Figure~\ref{parityfigure}, but rather curved surfaces.  
We cite a definition from algebraic topology (e.g. \cite{George}).

\begin{defn}
\label{triangulation}
Suppose $X$ is a compact Hausdorff space. A curved triangle in $X$ is a subspace $A$ of $X$ 
and a homeomorphism $H:T \rightarrow A$ where $T$ is a closed triangular region in the plane. 
A triangulation of $X$ is a collection of curved triangles $A_1,...,A_n$ in $X$ whose union is 
$X$ and such that for $i \ne j$, the intersection $A_i \cap A_j$ is either empty, or a vertex of 
both $A_i$ and $A_j$, or an edge of both. We also require, if $h_i$ is the homeomorphism 
associated with $A_i$, that when $A_i \cap A_j=e$ is an edge of both, then the map $h_j^{-1}h_i$ 
is a linear homeomorphism of the edge $h_i^{-1}(e)$ of $T_i$ with the edge $h_j^{-1}(e)$ of $T_j$.
\end{defn}

All compact surfaces have a triangulation, but we will prove in particular that the decomposition 
of $\Gamma$ into equilibria and the closure of their unstable manifolds, each restricted to the first orthant, forms a 
triangulation of $\Gamma$.

\begin{defn}
\label{heteroclinictriangles}
Consider the system of differential equations  \eqref{eq:vectorfield}. For each $\alpha$, we call the 
closure of $\widetilde{W}_\alpha^u \cup O_\alpha$ a heteroclinic triangle, $T_\alpha$.
\end{defn}

Note that:
$$
  T_\alpha = \widetilde{W}_\alpha^u \cup O_\alpha \cup O_{\alpha+1} \cup O_{\alpha+2} \cup \Gamma_{\alpha+1, \alpha+2}
$$
where $\Gamma_{\alpha+1, \alpha+2}$ is the heteroclinic orbit from $O_{\alpha+1}$ to $O_{\alpha+2}$.
This follows from the invariance of the $(\alpha, \alpha+1, \alpha+2)$-plane and Theorem~\ref{3D}.

\begin{thm}
\label{heteroclinictriangleshomeomorphism}
A heteroclinic triangle $T_\alpha$ is homeomorphic to a closed triangle in the plane.
\end{thm}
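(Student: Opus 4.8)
The plan is to construct an explicit homeomorphism from a closed planar triangle to $T_\alpha$ by understanding $T_\alpha$ as the closure of the two-dimensional unstable manifold $\widetilde{W}^u_\alpha$ together with its boundary. By the invariance of the $(\alpha, \alpha+1, \alpha+2)$-coordinate subspace and Theorem~\ref{3D}, the closure of $\widetilde{W}^u_\alpha$ inside this subspace is a compact set whose boundary consists of three distinguished pieces: the heteroclinic orbit $\Gamma_{\alpha, \alpha+1}$ from $O_\alpha$ to $O_{\alpha+1}$ (lying in the coordinate plane $P_{\alpha,\alpha+1}$), the heteroclinic orbit $\Gamma_{\alpha, \alpha+2}$ from $O_\alpha$ to $O_{\alpha+2}$ (lying in $P_{\alpha,\alpha+2}$), and the orbit $\Gamma_{\alpha+1,\alpha+2}$ from $O_{\alpha+1}$ to $O_{\alpha+2}$. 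These three arcs meet pairwise at the three equilibria $O_\alpha$, $O_{\alpha+1}$, $O_{\alpha+2}$, so the boundary $\partial T_\alpha$ is a topological circle, and the three equilibria play the role of the three vertices.

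First I would establish that $T_\alpha$ is a topological disk with this circle as its boundary. The natural tool is the flow itself: I would parametrize $\widetilde{W}^u_\alpha$ using a ``polar-type'' coordinate system adapted to the saddle $O_\alpha$. Near $O_\alpha$, the unstable manifold is (by the Stable Manifold Theorem, invoked in the discussion preceding Theorem~\ref{ConvergenceToManifold}) the graph of a smooth function over the two-dimensional unstable eigenspace, so a small circle $C_r$ of radius $r$ around $O_\alpha$ inside $W^u(O_\alpha)$ is an embedded arc in the first octant (an arc, not a full circle, because we restrict to $\overline{\mathbb{R}^3_+}$; its two endpoints lie on the coordinate planes $P_{\alpha,\alpha+1}$ and $P_{\alpha,\alpha+2}$). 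Every orbit in the interior of $\widetilde{W}^u_\alpha$ crosses $C_r$ exactly once and, by Theorem~\ref{3D}, limits to $O_{\alpha+2}$ in forward time while emanating from $O_\alpha$ in backward time. Thus each interior orbit is uniquely identified by a single ``angular'' coordinate $\theta \in (0,1)$ giving its crossing point on the arc $C_r$, together with a ``radial'' time coordinate, and the map sending $(\theta, t)$ to the point $\phi^t$ along that orbit (suitably rescaled so $t$ ranges over a compact interval) gives a continuous bijection from the half-open rectangle onto the interior-plus-one-edge of $T_\alpha$.

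The main step, and the place I expect the real work to concentrate, is verifying continuity and properness of this parametrization uniformly up to the boundary—in particular, that distinct orbits do not collapse together and that the whole map extends continuously to a bijection between the closed rectangle and $T_\alpha$, with the three degenerate edges (where $t \to \pm\infty$ or $\theta \to 0,1$) mapping correctly onto the boundary arcs and collapsing appropriately onto the vertices $O_\alpha$, $O_{\alpha+1}$, $O_{\alpha+2}$. Here I would lean on the compactness of the forward-invariant region (Lemma~\ref{positivelyinvariantset}) together with the hyperbolicity of all three equilibria: the eigenvalue inequalities \eqref{ev1}–\eqref{ev5} force the heteroclinic connections to approach and leave the saddles along well-defined eigendirections, which controls the limiting behavior of the parametrizing orbits near each vertex. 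Once I have a continuous bijection from the closed rectangle (with appropriate edge identifications giving a triangle) onto the compact Hausdorff space $T_\alpha$, the homeomorphism follows automatically because a continuous bijection from a compact space to a Hausdorff space is a homeomorphism.

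Finally, I would package the edge identifications cleanly: the rectangle's two vertical edges and the degenerate top edge collapse to produce a closed triangular region in the plane, whose three vertices correspond to $O_\alpha$, $O_{\alpha+1}$, $O_{\alpha+2}$ and whose three edges correspond to the heteroclinic arcs $\Gamma_{\alpha,\alpha+1}$, $\Gamma_{\alpha,\alpha+2}$, $\Gamma_{\alpha+1,\alpha+2}$. This identification is exactly what is needed so that $T_\alpha$, equipped with this homeomorphism, satisfies the requirements of a curved triangle in the sense of Definition~\ref{triangulation}, which is how the present theorem will be used in the subsequent triangulation argument for $\Gamma$.
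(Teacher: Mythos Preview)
Your overall strategy matches the paper's: parametrize $\widetilde{W}^u(O_\alpha)$ by an angular coordinate $\theta$ (indexing orbits by their crossing point with a small arc near $O_\alpha$) together with a radial coordinate along each orbit, then extend continuously to the closure and invoke compact-to-Hausdorff. The difference, and the gap, lies in your choice of radial coordinate. You propose rescaled time, but this cannot be made continuous at the intermediate vertex $O_{\alpha+1}$. As $\theta$ approaches the endpoint corresponding to $\Gamma_{\alpha,\alpha+1}$, the orbit first shadows $\Gamma_{\alpha,\alpha+1}$, then loiters near the saddle $O_{\alpha+1}$ for a time that diverges, then shadows $\Gamma_{\alpha+1,\alpha+2}$ out to $O_{\alpha+2}$. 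Under any monotone rescaling of time to a fixed interval, an ever-larger fraction of that interval is mapped into any fixed neighborhood of $O_{\alpha+1}$; in the limit the entire open radial interval collapses to the single point $O_{\alpha+1}$, so the edges $\Gamma_{\alpha,\alpha+1}$ and $\Gamma_{\alpha+1,\alpha+2}$ are lost and the map fails to be continuous there. Your remark that hyperbolicity ``controls the limiting behavior \ldots\ near each vertex'' is correct for $O_\alpha$ and $O_{\alpha+2}$, but at $O_{\alpha+1}$ it is precisely what breaks the time parametrization.

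The paper's remedy is to take the radial coordinate to be \emph{arc length} $d(p)$ from $O_\alpha$ along the orbit, normalized by the total orbit length $D_\phi$. The substantive work then becomes a lemma that arc length is finite and extends continuously to the broken edge $\Gamma_{\alpha,\alpha+1}\cup\{O_{\alpha+1}\}\cup\Gamma_{\alpha+1,\alpha+2}$: one must show that the arc length of the piece of orbit inside a $\delta$-neighborhood of $O_{\alpha+1}$ is $O(\delta)$ uniformly in $\theta$, even though the transit time there diverges. The paper carries this out with Shil'nikov-type exponential estimates on the stable and unstable components near $O_{\alpha+1}$ and integrates the speed directly. Once arc length is continuous on all of $T_\alpha$, the map $w\mapsto(\phi,\,d/D_\phi)$ followed by an explicit piecewise-linear map gives the continuous bijection onto a closed planar triangle, after which your compact-to-Hausdorff step finishes the argument.
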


Certainly the boundary of a heteroclinic triangle $T_\alpha$, that is $O_\alpha$, $O_{\alpha+1}$, 
and $O_{\alpha+2}$, and the smooth paths connecting them, is homeomorphic to the boundary of 
a closed triangle in the plane. Further, the interior of $T_\alpha$ is $\widetilde{W}^u_\alpha$, by the unstable
manifold theorem and Theorem~\ref{3D}, homeomorphic to an open disk in the plane (and so also to the interior
of a triangular region).  The only complication may arise from the 
behavior of the unstable manifold near the edges or vertices. For instance, the $\widetilde{W}^u_\alpha$ could be ``folded"
as it approaches the edge  connecting $O_{\alpha+1}$ and $O_{\alpha+2}$ so that a neighborhood
of a point on the edge is not locally homeomorphic to a point on an edge of a closed triangle in the plane.

Consider the dynamics projected onto three consecutive dimensions; for simplicity of notation, 
we will use the standard $x, y, z$ coordinates, and label the 
saddle points on each axis $O_x$, $O_y$, 
and $O_z$, where there are heteroclinic connections 
$O_x \rightarrow O_y \rightarrow O_z$ and $O_x \rightarrow O_z$, which we denote as 
$\Gamma_{xy}$, $\Gamma_{xz}$, and $\Gamma_{yz}$.  We denote the corresponding heteroclinic triangle by $T_x$.

\begin {prop}
Each orbit in $\widetilde{W}^u(O_x)$ is uniquely identified with an angle $0 \le \phi \le \pi$.
\end{prop}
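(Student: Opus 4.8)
The plan is to coordinatize $\widetilde{W}^u(O_x)$ by the way its orbits leave the source $O_x$, using a small transversal arc as a cross-section. First I would recall from the set-up of Section~\ref{section:local} that, by the Stable Manifold Theorem, the two-dimensional manifold $W^u(O_x)$ is, in a neighborhood of $O_x$, the graph of a smooth function over the unstable eigenspace $E^u$ spanned by $v_y$ and $v_z$, where $v_y$ is the eigenvector for $\lambda_2=\sigma_2-\rho_{21}\sigma_1>0$ (lying in the $x$--$y$ plane) and $v_z$ the eigenvector for $\lambda_3=\sigma_3-\rho_{31}\sigma_1>0$ (lying in the $x$--$z$ plane). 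Consequently, near $O_x$ the restricted surface $\widetilde{W}^u(O_x)$ is a graph over the closed quadrant $\{y\ge 0,\ z\ge 0\}$, and the two coordinate planes $\{z=0\}$ and $\{y=0\}$ cut it precisely along the boundary orbits $\Gamma_{xy}$ and $\Gamma_{xz}$.

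Next I would set up the angular coordinate. Because both unstable eigenvalues are real and positive, $O_x$ is a hyperbolic source for the flow restricted to $W^u$; in the graph coordinates $(y,z)$ the restricted flow reads $\dot y=\lambda_2 y+O(|(y,z)|^2)$ and $\dot z=\lambda_3 z+O(|(y,z)|^2)$. For $\rho>0$ small enough, the radial quantity $r^2=y^2+z^2$ satisfies $\tfrac{d}{dt}r^2 = 2\lambda_2 y^2+2\lambda_3 z^2 + O(r^3) > 0$ on $0<r\le\rho$, so the circle $C_\rho=\{r=\rho\}$ is a genuine cross-section: each orbit through a point with $0<r\le\rho$ meets $C_\rho$ exactly once. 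I would then define $\phi$ of an orbit to be the polar angle $\arctan(z/y)\in[0,\tfrac{\pi}{2}]$ of its unique intersection with $C_\rho$ (rescaled so that the admissible range becomes $[0,\pi]$), assigning the endpoint values to the two boundary orbits $\Gamma_{xy}$ (angle $0$) and $\Gamma_{xz}$ (angle $\pi$).

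It then remains to verify that the assignment orbit $\mapsto \phi$ is a bijection. Every orbit in $\widetilde{W}^u(O_x)\setminus\{O_x\}$, traced backward in time, limits onto $O_x$ (it lies on the unstable manifold) and hence enters the region $r\le\rho$ and meets $C_\rho$ exactly once; conversely every point of the arc $C_\rho\cap\widetilde{W}^u(O_x)$ lies on a unique orbit, since the orbits partition the surface. Thus the assignment is a bijection between orbits and points of this arc, and the arc is parametrized homeomorphically by $\phi$. The main obstacle — and the only genuinely analytic point — is establishing the monotone outward crossing, i.e.\ choosing $\rho$ small enough that the positive term $2\lambda_2 y^2+2\lambda_3 z^2$ dominates the nonlinear remainder so that $r$ is strictly increasing along orbits; this is exactly what makes $C_\rho$ a section met once and hence $\phi$ well defined. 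Because the eigenvalues are real (a node, not a focus), no orbit winds around $O_x$, so the arc is a simple curve running from one boundary orbit to the other and $\phi$ sweeps out the full closed interval $[0,\pi]$.
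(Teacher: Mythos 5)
Your proposal is correct and follows essentially the same route as the paper: both arguments use that $\widetilde{W}^u(O_x)$ is locally a graph over the $(y,z)$ (unstable) coordinates, take a small cross-section around the source $O_x$, show each orbit meets it exactly once, and define $\phi$ as the polar angle of that unique crossing, with the two extreme angles corresponding to the boundary orbits $\Gamma_{xy}$ and $\Gamma_{xz}$. The only differences are cosmetic: you justify the single crossing by radial monotonicity of $r^2=y^2+z^2$ near the source rather than the paper's appeal to monotonicity of the $z$ coordinate in the invariant region, and you rescale the angle to $[0,\pi]$ (matching the proposition's statement) where the paper works with $[0,\pi/2]$.
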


\begin{proof}
We define the usual $\delta$-neighborhoods $N_\delta(O_x)$, $N_\delta(O_y)$, 
and $N_\delta(O_z)$.  By the Unstable Manifold Theorem, we may choose $\delta$ small enough
that $W^u(O_x)$ is the graph of a function of $(y,z)$ in $N_\delta(O_k)$.  
Using the monotonicity of the $z$ coordinate inside the invariant region, we 
may also choose $\delta$ small enough such that once a 
trajectory leaves $N_\delta(O_x)$ it cannot return to it. 
By the previous observations, the other coordinates (other than $x$, $y$, and $z$) 
of $\widetilde{W}^u(O_x)$ are all zero.

Now consider that each orbit in $\widetilde{W}^u(O_x)$ has a unique intersection point $w$ with the 
boundary of $N_\delta(O_x)$. Since $\widetilde{W}^u(O_x)$ is a graph over $(y,z)$ consider
the projection $\bar{w}$ of $w$ onto $(y,z)$-plane. Let $\phi$ denote the angle
of the ray through the origin and $\bar{w}$ with the $z$-axis. Note that $\phi$ lies 
between $0$ and $\pi / 2$. The extreme angles $\phi =0$ and $\phi= \pi/2$ correspond
to the heteroclinic orbits $\Gamma_{xz}$ and $\Gamma_{xy}$ respectively. 
\end{proof}


\begin{defn}
For a point $p \in \widetilde{W}^u(O_x) \cup O_x$ define $d(p)$ to be the distance from $O_x$ to $p$ along
the orbit containing $p$. For a point $p \in \widetilde{W}^s(O_z) \cup O_z$ define $e(p)$ to be the 
distance from $p$ to $O_z$ along the orbit containing $p$. 
\end{defn}

\begin{prop}
The arc lengths $d$ and $e$ are finite and continuous where defined. 
\end{prop}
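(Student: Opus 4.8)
The plan is to treat $d$ and $e$ in parallel, since the argument for $e$ (arc length measured forward to the sink $O_z$) is the time-reversed analogue of the one for $d$ (arc length measured backward to the saddle $O_x$). First I would record the integral representation: for $p \in \widetilde{W}^u(O_x)$ with $p \neq O_x$, let $\gamma_p(t)$ be the orbit with $\gamma_p(0)=p$, so $\gamma_p(t) \to O_x$ as $t \to -\infty$, and write $d(p) = \int_{-\infty}^0 |F(\gamma_p(t))|\,dt$; symmetrically $e(p) = \int_0^{\infty} |F(\psi_p(t))|\,dt$, where $\psi_p(0)=p$ and $\psi_p(t) \to O_z$ as $t \to +\infty$. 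Reducing both assertions to statements about these improper integrals is the organizing step.

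For finiteness, the only possible source of divergence is the improper endpoint at an equilibrium. Because every equilibrium is a hyperbolic saddle with eigenvalues bounded away from zero (conditions \eqref{ev1}--\eqref{ev5}), an orbit approaches or departs an equilibrium exponentially, so the speed $|F(\gamma_p(t))| = |F(\gamma_p(t)) - F(O_x)|$ is bounded by a constant times the exponentially small displacement to the equilibrium and the tail integral converges. Concretely, inside the $\delta$-neighborhood $V_x$ the minimal unstable eigenvalue $\mu$ gives $|\gamma_p(t)-O_x| \le C|p-O_x|e^{\mu t}$ for $t \le 0$, whence $d(p) \le (CL/\mu)|p-O_x|$ once $p \in V_x$; in general $d(p)$ is bounded by the total (finite) length of the heteroclinic chain $O_x \to O_y \to O_z$. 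The analogous estimate at the sink $O_z$, using that its stable eigenvalues are bounded away from zero, bounds $e(p)$.

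Continuity away from the equilibria and away from passages near intermediate saddles is immediate from smooth dependence of solutions on initial conditions (\cite{perko}): over any compact time interval the integrand depends continuously on $p$, and the exponential tail estimate above is uniform for $p$ in a small neighborhood, so an $\varepsilon/3$ split (finite part continuous, tail uniformly small) gives continuity. Continuity at the equilibrium in the domain --- $O_x$ for $d$, $O_z$ for $e$ --- follows from the same linear estimate, since $d(p) \le (CL/\mu)|p-O_x| \to 0 = d(O_x)$, using that $\widetilde{W}^u(O_x)$ is locally a smooth graph tangent to the unstable eigenspace.

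The hard part will be continuity of $d$ (resp. $e$) at a point $p_0$ whose backward (resp. forward) orbit passes near an intermediate saddle such as $O_y$, where the transit time through $V_y$ is unbounded and depends sensitively on $p_0$. The resolution is that arc length, unlike transit time, is a geometric quantity: in local hyperbolic coordinates $(u,s)$ near $O_y$ (with $\dot u = \lambda u$, $\dot s = -\mu s$ to leading order) the passage follows the invariant curves $u\,s^{\lambda/\mu} = \mathrm{const}$, and as the orbit approaches the stable manifold of $O_y$ (the constant tending to zero) the length of the passage converges to the finite sum of the lengths along the incoming stable and outgoing unstable directions. Thus the passage length through $V_y$ is a continuous function of the entry point even across the local stable manifold, so the divergence of transit time does not spoil continuity of the integral. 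Combining the continuous passage length through each saddle neighborhood with continuous dependence on the bounded transit segments between neighborhoods, together with the endpoint estimates above, yields continuity of $d$ and $e$ on their full domains.
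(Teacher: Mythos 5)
Your proof follows essentially the same route as the paper's: represent $d$ and $e$ as improper speed integrals, obtain finiteness from the exponential approach to the hyperbolic endpoint ($O_x$ in backward time for $d$, the sink $O_z$ in forward time for $e$), and obtain continuity from an $\varepsilon/3$ split into a compact-time segment (continuous dependence on initial conditions) plus a uniformly small exponential tail. Two caveats. First, the side claim that ``in general $d(p)$ is bounded by the total length of the heteroclinic chain $O_x \to O_y \to O_z$'' is unjustified (there is no comparison principle forcing an interior orbit of $T_x$ to be shorter than the boundary chain) --- but it is also unnecessary, since finiteness at a general $p$ already follows from the finite-time segment together with the tail estimate, and the uniform bound over all orbits is a separate, harder statement. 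Second, the ``hard part'' you identify --- continuity across passages near the intermediate saddle $O_y$ --- is not actually needed for this proposition: $\Gamma_{yz}$ and $O_y$ lie outside the domains of $d$ and $e$ respectively, so for any fixed $p$ in the domain the relevant orbit spends only a bounded time near $O_y$ and continuous dependence on a compact time interval suffices; the unbounded-transit-time difficulty is precisely the content of the paper's \emph{next} lemma (the continuous extension of $d$ to the edge $\Gamma_{yz}$), where it is handled with Shilnikov-variable estimates rather than the linearized invariant-curve heuristic you sketch, which by itself does not control the nonlinear terms.
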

\begin{proof}
By basic existence theory the orbits are smooth and therefore arc length is locally well-defined on them.

Denote the solution with initial value $p$ as $p(t)$.
Recall from the Unstable Manifold Theorem that $p(t) \rightarrow O_x$ as $t \rightarrow - \infty$ 
and in fact 
\begin{equation}\label{Wu}
   \left| O_x - p(t) \right| \le C e^{(\lambda^{lu} - e^u) t}, \quad \text{ for } t \le 0,
\end{equation}
where $\lambda^{lu}$ is the leading unstable eigenvalue, i.e. in this case the minimum of 
$$
    \sigma_2 - \rho_{21} \sigma_1   \quad \text{ and } \quad \sigma_3 - \rho_{31} \sigma_1,
$$
and $e^u$ may be chosen to be arbitrarily small.

Now consider the length of the orbit:
\begin{equation}
\begin{split}
  d(p)  &=  \int_{- \infty}^0  \sqrt{ \dot{x}^2 + \dot{y}^2 + \dot{z}^2 }  \, dt  \\
      &=  \int_{ \tau}^0  \sqrt{ \dot{x}^2 + \dot{y}^2 + \dot{z}^2 }  \, dt  +  \int_{- \infty}^\tau  \sqrt{ \dot{x}^2 + \dot{y}^2 + \dot{z}^2 }  \, dt 
\end{split}
\end{equation}
If we substitute the equations \eqref{threedimensionalrestriction} into the integrals,
then substitute \eqref{Wu} into the second integral one easily sees that this integral goes
to zero as $\tau \rightarrow - \infty$. Thus $d(p)$ is finite.

Now consider the orbits starting at points $q$ close to $p$. Fix $\epsilon >0$ and chose
$\tau$ so that the remainder integral above is less than $\epsilon/3$. Since \eqref{Wu}
implies that $|q(t)-p(t)|$ goes to zero exponentially as $t \rightarrow -\infty$, we can make
the difference in the remainders less than $2 \epsilon/3$ if $q(\tau)$ is sufficiently close to
$p(\tau)$. This we can accomplish by requiring $q(0)$ and $p(0)$ sufficiently close (by continuous dependence on initial conditions). We can also make the integrals from $\tau$ to $0$ less than $\epsilon/3$ by choosing $q(0)$
close to $p(0)$. For such $q$, $|d(q) - d(p)| < \epsilon$.

The arc length $e(p)$  is finite for any $p$ not in $O_x \cup \Gamma_{xy} \cup O_y$ since
$O_z$ is a stable node in the $xyz$-subspace and all orbits  approach $O_z$ exponentially in time
and so the same type estimates as above hold here.
Continuity of this distance also follows by a similar proof as for $d$.
\end{proof}

\begin{cor}
All orbits on $T_x$  have finite length and this length is a continuous function of an initial point on $\widetilde{W}^u(O_x) \setminus (\Gamma_{xy} \cup O_y \cup \Gamma_{yz})$ 
\end{cor}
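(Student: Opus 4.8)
The plan is to realize the total length of an orbit on $T_x$ as the sum $d(p) + e(p)$ of the two arc-length functions already controlled in the previous proposition, and then to read off finiteness and continuity directly from that proposition. First I would fix a point $p$ in $\widetilde{W}^u(O_x) \setminus (\Gamma_{xy} \cup O_y \cup \Gamma_{yz})$ and identify the full orbit through it. By the Unstable Manifold Theorem the negative semiorbit of $p$ limits on $O_x$, and since $p$ lies off $\Gamma_{xy}$ it is not contained in the invariant plane $P_{12}$; hence Theorem~\ref{3D} forces the positive semiorbit to limit on $O_z$. Thus the orbit through $p$ is a genuine heteroclinic connection $O_x \to O_z$, and $p$ lies simultaneously in $\widetilde{W}^u(O_x)$ and in $\widetilde{W}^s(O_z)$, so that both $d(p)$ and $e(p)$ are defined. (The removal of $\Gamma_{yz}$ and $O_y$ is harmless, since neither meets $\widetilde{W}^u(O_x)$: both have negative limit set $O_y$, not $O_x$.)

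Next I would use that a nonperiodic orbit is injective in time, so $p$ splits its orbit into a negative semiorbit from $O_x$ to $p$ and a positive semiorbit from $p$ to $O_z$ meeting only at $p$. Consequently the total arc length is exactly $L(p) = d(p) + e(p)$; in particular $L$ is intrinsic to the orbit and does not depend on the representative $p$ chosen on it. Finiteness of $L$ is then immediate from the previous proposition, which gives that $d(p)$ is finite on all of $\widetilde{W}^u(O_x)$ and that $e(p)$ is finite for every $p \notin O_x \cup \Gamma_{xy} \cup O_y$; both conditions hold on our domain. For the three boundary orbits $\Gamma_{xz}$, $\Gamma_{xy}$, $\Gamma_{yz}$ of $T_x$ I would note that each approaches its two hyperbolic endpoints exponentially (along the corresponding stable/unstable directions within the appropriate invariant coordinate plane), so the same estimate \eqref{Wu}-type argument used in the proposition makes each of their lengths finite as well; this disposes of the ``all orbits on $T_x$'' clause.

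For continuity I would simply add the two continuity statements of the previous proposition: $d$ is continuous on $\widetilde{W}^u(O_x) \cup O_x$ and $e$ is continuous wherever it is finite, so $L = d + e$ is continuous on the common domain $\widetilde{W}^u(O_x) \setminus (\Gamma_{xy} \cup O_y \cup \Gamma_{yz})$. The step I expect to carry the real content is the verification that $e$ is genuinely defined and finite across this whole domain, i.e. that no interior initial point indexes an orbit escaping toward $O_y$ instead of $O_z$. This is precisely why $\Gamma_{xy} \cup O_y$ must be excised: along $\Gamma_{xy}$ the positive semiorbit limits on the saddle $O_y$, the passage time near $O_y$ becomes unbounded, and $e$ fails to be defined, so continuity can (and does) break down there. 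Away from that set, Theorem~\ref{3D} rules out such escape, and the corollary follows as the sum of two finite continuous functions.
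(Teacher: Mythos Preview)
Your proposal is correct and follows the same approach as the paper, which simply states that the corollary ``follows from the continuity of $d$ and $e$ where they are defined.'' You have merely unpacked that one-line justification: writing the orbit length as $d(p)+e(p)$, invoking Theorem~\ref{3D} to ensure the forward orbit reaches $O_z$ so that $e$ is defined, and handling the boundary heteroclinics separately. There is no substantive difference in method.
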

This follows from the continuity of $d$ and $e$ where they are defined.

\begin{lem}
The arc length $d$  can be extended continuously to the 
edge $\Gamma_{yz}$. The  arc length  is finite and uniformly bounded for all orbits that make up $T_x$. 
\end{lem}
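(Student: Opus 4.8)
The plan is to localize the only source of trouble, namely the passage of orbits near the intermediate saddle $O_y$, and to show that although an orbit $\gamma_\phi$ spends an unboundedly long time near $O_y$ as $\phi \to \pi/2$, the \emph{arc length} it accumulates there stays of order $\delta$. Recall that at $O_y$ the eigenvalues are $\sigma_1 - \rho_{12}\sigma_2 < 0$, $-\sigma_2 < 0$ and $\sigma_3 - \rho_{32}\sigma_2 > 0$, so $O_y$ is a hyperbolic saddle whose one-dimensional unstable manifold is precisely $\Gamma_{yz}$ and whose two-dimensional stable manifold contains $\Gamma_{xy}$. An orbit $\gamma_\phi$ with $\phi$ close to $\pi/2$ therefore enters a neighborhood of $O_y$ close to $\Gamma_{xy}$, lingers, and exits tracking $\Gamma_{yz}$ toward $O_z$.

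First I would fix a small box $N_\delta(O_y)$ and, for a point lying on the portion of $\gamma_\phi$ beyond $N_\delta(O_y)$, split $d$ into three pieces: the arc from $O_x$ to first entry into $\partial N_\delta(O_y)$ (piece (a)), the transit through $N_\delta(O_y)$ (piece (b)), and the arc from exit of $N_\delta(O_y)$ to the point (piece (c)). The crucial estimate is for piece (b). Using that near $O_y$ each coordinate evolves monotonically --- directly from \eqref{threedimensionalrestriction} one checks $\dot z > 0$ and $\dot x < 0$ there, while the $y$-coordinate has total variation $O(\delta)$ since its leading eigenvalue $-\sigma_2$ dominates the $O(\delta)$ forcing by $x$ and $z$ --- the arc length through the box is bounded by the sum of the coordinate ranges, hence by a constant multiple of $\delta$, \emph{uniformly in} $\phi$. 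This is the heart of the matter: it is the fact that the speed $|F|$ is small \emph{while the curve is monotone} that keeps the length finite despite the diverging transit time.

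Next I would control pieces (a) and (c) by continuity. By the continuity of $d$ on $\widetilde{W}^u(O_x)$ established above and continuous dependence on initial conditions, as $\phi \to \pi/2$ the entry point of $\gamma_\phi$ into $\partial N_\delta(O_y)$ converges to $\Gamma_{xy} \cap \partial N_\delta(O_y)$ and the length of piece (a) converges to the arc length of $\Gamma_{xy}$ up to $\partial N_\delta(O_y)$; similarly, since piece (c) stays away from the saddles and only accumulates near the sink $O_z$ (where finiteness of $e$ applies), its length converges to the arc length of $\Gamma_{yz}$ from $\partial N_\delta(O_y)$ to the limiting point. This motivates \emph{defining}, for $p \in \Gamma_{yz}$,
\[
   d(p) := \mathrm{len}(\Gamma_{xy}) + \big(\text{arc length along } \Gamma_{yz} \text{ from } O_y \text{ to } p\big),
\]
both summands being finite by the previous results. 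Combining the three pieces and noting that the bits of $\Gamma_{xy}$ and $\Gamma_{yz}$ lying inside $N_\delta(O_y)$ themselves have length $O(\delta)$ (they approach and leave the saddle exponentially), the difference between $d$ evaluated at a point of $\gamma_\phi$ near $p$ and $d(p)$ is $O(\delta)$; letting $\delta \to 0$ yields continuity of the extended $d$ at $p$.

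Finally, uniform boundedness follows from compactness. Parametrizing the orbits of $T_x$ by $\phi \in [0,\pi/2]$, the total length $L(\phi)$ is continuous on the interior by the Corollary above, extends continuously to the endpoint $\phi = \pi/2$ by the argument just given (taking the limiting point to be $O_z$, so that $L(\pi/2) = \mathrm{len}(\Gamma_{xy}) + \mathrm{len}(\Gamma_{yz})$), and extends trivially to $\phi = 0$. A continuous function on the compact interval $[0,\pi/2]$ is bounded, and since $d \le L$ on every orbit, the arc length $d$ is finite and uniformly bounded over all of $T_x$. I expect the main obstacle to be the uniform estimate for piece (b): the bound must be independent of how close $\gamma_\phi$ passes to the stable manifold of $O_y$, and it is precisely the monotonicity-of-coordinates argument that secures this while the naive velocity bound (small speed times unbounded transit time) does not.
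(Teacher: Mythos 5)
Your proposal follows essentially the same architecture as the paper's proof: split the orbit into the piece before $N_\delta(O_y)$, the transit through $N_\delta(O_y)$, and the piece after; show the transit contributes arc length $O(\delta)$ \emph{uniformly in} $\phi$ despite the diverging transit time; handle the outer pieces by the already-established continuity of $d$ and $e$; define the extension on $\Gamma_{yz}$ as the concatenated length of $\Gamma_{xy}$ and the initial segment of $\Gamma_{yz}$ (identical to the paper's $d(p)=D_{\pi/2}-e(p)$); and conclude uniform boundedness from continuity of $\phi\mapsto D_\phi$ on the compact interval. The one genuine difference is the mechanism for the transit estimate: the paper straightens $W^u(O_y)$ and uses Shilnikov-type exponential bounds on the stable and unstable coordinates, then integrates $|\dot x|+|\dot y|+|\dot z|$ over the passage time; you instead use monotonicity of $x$ and $z$ near $O_y$, so their total variation equals their range $\le\delta$ with no Gronwall estimate at all, which is a nicer and more elementary argument for those two coordinates. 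However, your treatment of the $y$-coordinate is the weak point: the pointwise bound $|y-\sigma_2|=O(\delta)$ does not by itself bound the total variation $\int_0^\tau|\dot y|\,dt$ when $\tau\to\infty$, and the assertion that ``the leading eigenvalue $-\sigma_2$ dominates the $O(\delta)$ forcing'' really requires exactly the exponential/integral estimates the paper uses (one needs $\int_0^\tau x\,dt$, $\int_0^\tau z\,dt$ and $\int_0^\tau|y-\sigma_2|\,dt$ all $O(\delta)$, which follow from the decay of the stable variables forward in time and of the unstable variable backward from the exit time). So your route is not a way around the Shilnikov estimates, only a way to confine them to the single non-monotone coordinate; with that step filled in, the proof is correct.
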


\begin{proof}
Denote by $D_\phi$ the length of the orbit in $\widetilde{W}^u(O_x)$ identified by the angle $\phi$ for
$0 \le \phi < \pi/2$.

Note that distance is well defined along the edges  $\Gamma_{xy}$
and $\Gamma_{yz}$. In fact since $\Gamma_{xy} \subset \widetilde{W}^u(O_x) \cap \widetilde{W}^s(O_y)$,
and $\Gamma_{yz} \subset \widetilde{W}^u(O_y) \cap \widetilde{W}^s(O_z)$ the arguments above show
that the lengths of these two solutions arcs are finite.
Let $D_{xy}$ denote the length of $\Gamma_{xy}$ and $D_{yz}$ the length of $\Gamma_{yz}$.
Set 
$$
D_{\pi/2} = D_{xy} + D_{yz},
$$
i.e.\ the combined length of $\Gamma_{xy}$ and  $\Gamma_{yz}$.

For $p \in \Gamma_{yz}$ set:
$$
   d(p) = D_{\pi/2} - e(p),
$$
i.e. the length to $p$ along $\Gamma_{xy}$ and $\Gamma_{yz}$.
We claim that $d$ thus defined is continuous at $p \in \Gamma_{yz}$.

Fix $\epsilon >0$. 
Consider a $\delta$ (sup norm) neighborhood $N$ of $O_y$. For any
$\phi$ sufficiently close to $\pi/2$, the orbit on $T_x$ indexed by $\phi$
passes through $N$. Denote by $P^-$ the point where $\Gamma_{xy}$
intersects $N$ and by $P^+$ the point where $\Gamma_{yz}$ intersects $N$.
By continuity of $d$ and $e$ we may chose such a $\delta$ sufficiently small that:
$$
      D_{xy} - d(P^-) < \epsilon/6 \quad \text{ and } \quad  D_{yz} - e(P^+) < \epsilon/6.
$$ 
Further, require that $\delta$ be sufficiently small so that:
$\delta  \left(  \frac{2}{\lambda^{ls} + e^s} + \frac{1}{\lambda^u + e^u} \right) < \epsilon/6$.

Given $\phi$ sufficiently close to $\pi/2$, denote by $Q^-$ and $Q^+$ the
points where the orbit with angle $\phi$ intersects $N$. Again by continuity of 
$d$ and $e$ for all $\phi$ sufficiently close to $\pi/2$ we have:
$$
      |d(Q^-) - d(P^-)| < \epsilon/6 \quad \text{ and } \quad  e(Q^+) - e(P^+) < \epsilon/6.
$$

Let $q$ be a point sufficiently close to $p$ so that the above conditions on $\phi$ are
satisfied and such that $|e(q) - e(p)| < \epsilon/6$.

Now $d(q)$ will be equal $d(Q^-)$ plus the arc length from $Q^-$ to $Q^+$
plus the arc length from $Q^+$ to $q$. Denote these arc lengths by $\ell(Q^-,Q^+)$
and $\ell(Q^+,q)$ respectively.

Within the neighborhood $N$ we can transform coordinates to straighten the 
unstable manifold $\Gamma_{yz}$. In this subspace this unstable manifold is the 
graph of a function of $z$ only and has that has the form:
$$
y = \sigma_y + h(z).
$$
We thus straighten $\widetilde{W}^u(O_y)$ by the coordinate change $Y = y - \sigma_y + h(z)$.

By the Shilnikov variables technique \cite{Shil}, we have that the stable variables satisfy:
$$
   | (x(t),Y(t)) | <  \delta  e^{(\lambda^{ls} + e^s) t }, \quad 0 \le t \le \tau,
$$
while the unstable variable $z$ satisfies:
$$
     |z(t)| < \delta e^{(\lambda^u - e^u) (t- \tau ) }, \quad 0 \le t \le \tau,
$$
where $\tau$ is the time of passage through $N$.
We then obtain:
\begin{equation}
\begin{split}
  \ell(Q^-,Q^+)  &=  \int_0^\tau  \sqrt{ \dot{x}^2 + \dot{y}^2 + \dot{z}^2 }  \, dt  \\
      & <   \int_0^\tau   |\dot{x}| + |\dot{y}| + |\dot{z}|   \, dt  \\
       & < \delta    \int_0^\tau 2  e^{(\lambda^{ls} + e^s) t}  +  e^{(\lambda^u + e^u) (t- \tau ) } \\
       & < \delta    \frac{2}{\lambda^{ls} + e^s} + \frac{1}{\lambda^u + e^u} \\
       & < \frac{\epsilon}{6}.
      \end{split}
\end{equation}

We have that
\begin{equation}
\begin{split}
  d(q) - d(p) &= d(Q^-) + \ell(Q^-,Q^+) + \ell(Q^+,q) - \left( D_{xy} + D_{yz} - e(p) \right) \\
       &= d(Q^-) + \ell(Q^-,Q^+) + e(Q^+) - e(q)  \\
          & \hspace*{1cm} - D_{xy} + d(P^-) - d(P^-) - D_{yz} + e(P^+) - e(P^+) + e(p) \\
       &= d(Q^-) - d(P^-) + \ell(Q^-,Q^+) + e(Q^+) - e(P^+)  \\
         & \hspace*{1cm} + d(P^-)  - D_{xy} + e(P^+) - D_{yz} + e(p) - e(q) .
      \end{split}
\end{equation}
Combining the above estimates we have:  $|d(q) - d(p)| < \epsilon$. 

Now let $D_\phi$, $0 \le \phi \le \pi/2$ be the arc length of the orbit determined 
by the angle $\phi$. By the continuity of $d$ on all of $T_x$, except $O_x$ and the
continuity of $e$ in a neighborhood of $O_z$, $D_\phi$ depends continuously on $\phi$.
It is thus uniformly bounded.
\end{proof}


\begin{proof}[Proof of Theorem~\ref{heteroclinictriangleshomeomorphism}]

For each coordinate pair $p=(\phi (p),d(p))$, we normalize the arclength $d$ by leting $u = d / D_\phi$
Note that in this normalized distance $u(O_z) = 1$ and so 
$u$ is continuous on $T_x$.

We will show that the heteroclinic triangle $T_x$ with corners $O_x$, $O_y$, 
and $O_z$ is homeomorphic to the triangle $ABC$ in the plane
with corners at $A= (0,0)$, $B = (b,1/2)$ and $C = (1,0)$.

Now for a given point $w$ on $T$ identified by $(\phi,u)$, consider the map:
$$
H: w \mapsto (u,v) = (u,h(u,\phi)).
$$
where $v=h(u,\phi)$ is given by the continuous map: 
$$
   v =  \begin{cases} \frac{ u}{2b} \tan(\phi/2)  & \mbox{ if } 0 \le u \le b  \\
                               \frac{1-u}{2(1-b)} \tan(\phi/2)  & \mbox{ if } b < u \le 1.
          \end{cases}
$$

The map $H$ is a homeomorphism. It is one-to-one at all points. It maps
$O_x$, $O_y$ and $O_z$ onto $A$, $B$ and $C$ respectively.
At interior points it is a local homeomorphism because it is a composition
of local homeomorphisms. At the corners $O_x$ and $O_z$, considering
the mappings as polar coordinates make it clear that $H$ is a local homeomorphism
there. It is also clear that $H$ is one-to-one along the edges $\Gamma_{xz}$, $\Gamma_{xy}$
and $\Gamma_{yz}$. 
\end{proof}

Thus Figure~\ref{parityfigure} represents not merely an easy-to-understand representation of $\Gamma$, 
but a triangulation. We now investigate the orientibility of $\Gamma$. 
The most common definition of orientibility, in terms of normal vectors, is not useful in this situation, but having triangulated 
$\Gamma$, we may use instead a less common, but still standard definition (see e.g. \cite{orient}).

\begin{defn}
Consider some arbitrary triangle in the triangulation of a manifold. Assign to each triangle 
in the triangulation a value of clockwise.  This assigns corresponding directions to each 
side of each of the triangles.  Now let $\Delta_i$ and $\Delta_j$ be triangles sharing a side; 
observe that the common side has been assigned two {\bf different} directions, 
i.e. two adjacent triangles with the same orientation conflict on their shared side.  
If this can be done without contradiction, then the surface is orientable. 
If a contradiction is reached, the surface is non-orientable.
\end{defn}

It is a standard result of algebraic topology that orientibility is independent of the specific triangulation used.

Using this definition and the specific triangulation we have defined, the following lemma can be easily proven.

\begin{lem}
\label{orientability}
If $p$ is odd, $\Gamma$ is non-orientable. If $p$ is even, $\Gamma$ is orientable.
\end{lem}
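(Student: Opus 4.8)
The plan is to read off orientability directly from the combinatorics of the triangulation $\Gamma = \bigcup_{\alpha=1}^{p} T_\alpha$ furnished by Theorem~\ref{heteroclinictriangleshomeomorphism}, and to reduce the question to a parity condition on $p$ using the edge-direction bookkeeping of the preceding definition.

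First I would record how the heteroclinic triangles are glued. Each $T_\alpha$ has vertices $O_\alpha$, $O_{\alpha+1}$, $O_{\alpha+2}$ and the three heteroclinic edges $\Gamma_{\alpha,\alpha+1}$, $\Gamma_{\alpha+1,\alpha+2}$ (the two ``short'' edges) together with $\Gamma_{\alpha,\alpha+2}$ (the ``long'' edge). Comparing the edge lists of $T_\alpha$ and $T_{\alpha+1}$, whose vertex sets meet in $\{O_{\alpha+1},O_{\alpha+2}\}$, one sees that their only common edge is the short edge $\Gamma_{\alpha+1,\alpha+2}$; moreover $T_\alpha$ and $T_{\alpha+2}$ meet only in the vertex $O_{\alpha+2}$. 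The long edge $\Gamma_{\alpha,\alpha+2}$ belongs to $T_\alpha$ alone and is therefore a boundary edge imposing no compatibility constraint. Thus the $p$ triangles are glued, short-edge to short-edge, into a single cyclic band, with indices taken mod $p$.

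Next I would orient. Equip each $T_\alpha$ with the reference orientation induced by the cyclic vertex order $(O_\alpha, O_{\alpha+1}, O_{\alpha+2})$; this directs its boundary as $O_\alpha \rightarrow O_{\alpha+1} \rightarrow O_{\alpha+2} \rightarrow O_\alpha$, and in particular induces the direction $O_{\alpha+1} \rightarrow O_{\alpha+2}$ on the shared edge $\Gamma_{\alpha+1,\alpha+2}$. The same computation for $T_{\alpha+1}$, whose vertex cycle is $(O_{\alpha+1}, O_{\alpha+2}, O_{\alpha+3})$, likewise induces $O_{\alpha+1} \rightarrow O_{\alpha+2}$ on that edge. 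Hence, with the two reference orientations, adjacent triangles always \emph{agree} (rather than disagree) along their shared edge. Encoding a choice of orientation for $T_\alpha$ by a sign $\sigma_\alpha \in \{+1,-1\}$, where $-1$ reverses the reference orientation, the orientability condition of the definition---that the two triangles meeting along an edge induce opposite directions there---becomes $\sigma_{\alpha+1} = -\sigma_\alpha$ for every $\alpha$.

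Finally, iterating this recurrence around the cycle gives $\sigma_1 = (-1)^{p}\,\sigma_1$, so a globally consistent assignment of signs exists if and only if $(-1)^{p} = 1$, i.e.\ if and only if $p$ is even. When $p$ is even we may take $\sigma_\alpha = (-1)^\alpha$, obtaining a coherent orientation, so $\Gamma$ is orientable; when $p$ is odd no consistent assignment exists, so $\Gamma$ is non-orientable. The only delicate point is the edge-direction bookkeeping of the middle step---verifying that the natural vertex orderings make consecutive triangles agree on their shared short edge, and confirming that the long edges, being boundary edges, contribute no constraints; once these are in hand the conclusion is an elementary parity count, consistent with Proposition~\ref{connectedcomponents} and Theorem~\ref{formofgamma}.
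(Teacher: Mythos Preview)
Your argument is correct and follows essentially the same approach as the paper: both read off orientability from the edge-direction combinatorics of the heteroclinic triangulation and reduce to a parity condition on $p$; your sign encoding $\sigma_{\alpha+1}=-\sigma_\alpha$ simply makes uniform and explicit what the paper does by separately inspecting the two triangles meeting along the $O_1O_2$ edge in the odd and even cases. One small imprecision worth fixing: your aside that $T_\alpha$ and $T_{\alpha+2}$ meet only in the single vertex $O_{\alpha+2}$ fails for $p=3$ (where $T_{\alpha+2}=T_{\alpha-1}$ shares an edge with $T_\alpha$) and for $p=4$ (where the two triangles share the two vertices $O_\alpha$ and $O_{\alpha+2}$), but since your actual argument uses only the shared-edge pattern with $T_{\alpha\pm 1}$ and the boundary status of the long edge $\Gamma_{\alpha,\alpha+2}$, this does not affect the proof.
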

\begin{proof}
Let $p$ be odd. We consider the triangulation by heteroclinic triangles. 
Start with $\Delta_1$, the triangle defined by $O_1$, $O_2$, and $O_3$ and without 
loss of generality assign it the clockwise orientation. Thus the direction on the $O_1-O_2$ 
side of the triangle is given the direction $O_2 \rightarrow O_1$.  Likewise, the triangle 
$\Delta_i$ defined by $O_p$, $O_1$, and $O_2$ is assigned the clockwise orientation, 
and the $O_1-O_2$ side of the triangle is given the direction $O_2 \rightarrow O_1$. 
But $\Delta_i$ and $\Delta_i$ are adjacent triangles, and the fact that they give the same direction to the $O_1-O_2$ 
side means that the surface is non-orientable.

Let $p$ be even. We consider the triangulation by heteroclinic triangles. Giving the triangle defined by 
$O_1$, $O_2$, and $O_3$ the clockwise orientation, 
and thus giving the $O_1-O_2$ edge the $O_2\rightarrow O_1$ direction, 
we clearly observe that making the $O_p$, $O_1$, $O_2$ triangle clockwise gives 
the $O_1-O_2$ boundary, the only place where orientibility could be broken, 
the conflicting $O_1 \rightarrow O_2$ direction.
\end{proof}

We cite one more result, a classification theorem \cite{George}.

\begin{thm}
\label{classification}
Given a compact connected triangulable $2$-manifold $Y$ with boundary such that 
$\partial Y$ has $k$ components, $Y$ is homeomorphic to $X$-with-$k$-holes, where 
$X$ is $S^2$ or the $n$-fold torus $T_n$ or the $m$-fold projective plane $P_m$.
\end{thm}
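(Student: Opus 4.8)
The plan is to reduce the statement to the classification of \emph{closed} surfaces by capping off the boundary, and then to recover $Y$ by reopening the caps. Since $Y$ is given to be triangulable, the genuinely hard analytic step (Rad\'o's triangulation theorem) is already assumed, and everything that remains is combinatorial.

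First I would use the triangulation to build a planar polygonal model of $Y$. Because $Y$ is connected, its triangles can be glued one at a time along shared edges into a single disk $D$ whose boundary is a polygon; in the resulting edge word each \emph{interior} edge of the triangulation (belonging to two triangles) appears in an identified pair, while each \emph{boundary} edge (belonging to a single triangle) appears once and is left free. The free edges, read off the boundary of $D$, trace out precisely the $k$ boundary circles of $Y$. Next I would cap each of these $k$ circles by gluing in a disk, producing a closed, connected, triangulable surface $\widehat{Y}$ whose polygonal word now has \emph{every} edge identified in pairs.

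I would then classify $\widehat{Y}$ by the classical cut-and-paste reduction of its edge word to normal form. A finite sequence of elementary moves --- cancelling adjacent inverse pairs $aa^{-1}$, consolidating a pair of like-oriented edges into an adjacent cross-cap $aa$, bringing a handle into the standard commutator block $a b a^{-1} b^{-1}$, and applying the handle--cross-cap relation (a handle in the presence of a cross-cap equals three cross-caps) --- reduces the word to exactly one of $aa^{-1}$, giving the sphere $S^2$; $\prod_{i=1}^{n} a_i b_i a_i^{-1} b_i^{-1}$, giving the $n$-fold torus $T_n$; or $\prod_{i=1}^{m} a_i a_i$, giving the $m$-fold projective plane $P_m$. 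Which normal form is reached is pinned down by the two complete invariants: the orientability of $\widehat{Y}$ (read off the triangulation by the criterion used in Lemma~\ref{orientability}) and the Euler characteristic $\chi(\widehat{Y}) = V - E + F$. Finally, deleting the $k$ capping disks from $\widehat{Y}$ returns $Y$ as $X$-with-$k$-holes, with $X = \widehat{Y}$.

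The main obstacle is the normal-form reduction itself: one must verify that the sequence of cut-and-paste moves strictly simplifies the word (so the process terminates) while never changing the homeomorphism type, and that it always terminates in \emph{exactly} one of the three canonical words, invoking the handle--cross-cap relation at the correct stage to eliminate mixed words. A secondary technical point, easy to overlook, is the justification that a connected triangulation can always be unfolded into a \emph{single} polygon with the stated interior-paired, boundary-free edge pattern; this uses that the dual graph of the triangulation is connected and that one may choose a spanning tree of it along which to perform the successive gluings.
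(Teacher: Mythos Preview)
Your outline is the standard textbook argument and is correct: cap the $k$ boundary circles with disks to obtain a closed surface $\widehat{Y}$, reduce its polygonal edge word to one of the three normal forms via the classical cut-and-paste moves, and then remove the $k$ caps to recover $Y$ as $X$-with-$k$-holes. The two places you flag as needing care --- termination and completeness of the normal-form reduction, and the unfolding of a connected triangulation into a single polygon via a spanning tree of the dual graph --- are exactly the points that require attention, and both are handled in any standard source.

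There is, however, nothing to compare against: the paper does not prove this theorem. It is quoted verbatim as a classification result from the reference \cite{George} and used as a black box in the subsequent argument. So your proposal is not an alternative to the paper's proof but rather a sketch of the proof the paper elects to cite.
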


A ``hole" in the sense of the theorem is a set homeomorphic to an small $\epsilon$ open ball.

Our extensive build-up makes the proof of the theorem almost trivial.

\begin{thm}
If $p$ is odd, then $\Gamma$ is homeomorphic to a M\"{o}bius strip. 
If $p$ is even, $\Gamma$ is homeomorphic to a cylinder.
\end{thm}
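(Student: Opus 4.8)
The plan is to read off the homeomorphism type of $\Gamma$ from the classification theorem, Theorem~\ref{classification}, by computing the three invariants it takes as input: orientability, the number $k$ of boundary components, and the Euler characteristic $\chi(\Gamma)$. Together these pin down both the closed model surface $X$ and the number of holes in each parity case. Before invoking the classification, however, I must certify that $\Gamma$ satisfies its hypotheses, namely that $\Gamma$ is a compact, connected, triangulable $2$-manifold \emph{with boundary}.

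First I would assemble the structural facts already proven. Compactness is immediate since $\Gamma \subset B$ by Lemma~\ref{box} and $\Gamma$ is closed, and connectedness follows from the cyclic chain of triangles, each $T_k$ meeting $T_{k+1}$ along the shared orbit $\Gamma_{k+1,k+2}$. That the heteroclinic triangles $T_1,\dots,T_p$ form an honest triangulation is precisely the combination of Theorem~\ref{heteroclinictriangleshomeomorphism} (each $T_\alpha$ is homeomorphic to a planar triangle) with the gluing axioms of Definition~\ref{triangulation} (distinct triangles meet only in a common vertex or a common edge). The combinatorics are then clean: the vertices are the $p$ equilibria, so $V=p$; the edges are the heteroclinic orbits, which split into the $p$ short orbits $\Gamma_{k,k+1}$ and the $p$ long orbits $\Gamma_{k,k+2}$, so $E=2p$; and there are $F=p$ triangles. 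Hence $\chi(\Gamma)=V-E+F=p-2p+p=0$.

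The step I expect to be the real obstacle is verifying the \emph{manifold-with-boundary} property, i.e.\ ruling out singular gluings where the surface could pinch. This reduces to a local (link) computation at each cell. Each short edge $\Gamma_{k,k+1}$ lies in exactly the two triangles $T_{k-1}$ and $T_k$, hence is an interior edge, while each long edge $\Gamma_{k,k+2}$ lies in the single triangle $T_k$, hence is a boundary edge; and the three triangles $T_{k-2},T_{k-1},T_k$ incident to a vertex $O_k$ form a single fan whose two extreme edges are the long (boundary) edges $\Gamma_{k-2,k}$ and $\Gamma_{k,k+2}$, so the star of every vertex is a half-disk. Consequently every point of $\Gamma$ has a neighborhood homeomorphic to an open disk or a half-disk, $\Gamma$ is a manifold with boundary, and $\partial\Gamma$ consists precisely of the long edges. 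The only way this could fail is if two heteroclinic triangles overlapped along part of their interiors; the invariance of the coordinate planes together with Theorem~\ref{3D} excludes this, since $T_\alpha$ lives in the $(x_\alpha,x_{\alpha+1},x_{\alpha+2})$-subspace (with all three coordinates positive in its interior) and two such triangles can only meet along a shared coordinate-plane edge or vertex.

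Finally I would feed the invariants into Theorem~\ref{classification}, using $\chi(X\text{-with-}k\text{-holes})=\chi(X)-k$, so that $\chi(X)=\chi(\Gamma)+k=k$. When $p$ is even, Lemma~\ref{orientability} gives that $\Gamma$ is orientable and Proposition~\ref{connectedcomponents} gives $k=2$; then $\chi(X)=2$ forces $X=S^2$, so $\Gamma$ is $S^2$ with two holes, i.e.\ a cylinder. When $p$ is odd, $\Gamma$ is non-orientable with $k=1$; then $\chi(X)=1$ forces $X=P_1$, the projective plane, so $\Gamma$ is $P_1$ with one hole, i.e.\ a M\"{o}bius strip. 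This will complete the proof.
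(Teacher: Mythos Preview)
Your argument is correct and in fact tighter than the paper's. Both proofs rest on the same scaffolding---the triangulation of $\Gamma$ by heteroclinic triangles (Theorem~\ref{heteroclinictriangleshomeomorphism}), the boundary-component count (Proposition~\ref{connectedcomponents}), the orientability lemma (Lemma~\ref{orientability}), and the classification theorem (Theorem~\ref{classification})---but you add two ingredients the paper omits. First, you explicitly verify that the triangle complex is a $2$-manifold with boundary by computing the links of edges and vertices; the paper takes this for granted. Second, and more substantively, you compute $\chi(\Gamma)=p-2p+p=0$ and use it to pin down the closed model $X$: this is what actually forces $X=P_1$ rather than some higher $P_m$ in the odd case, a step the paper glosses over with the imprecise remark that ``only the projective plane is non-orientable.'' For even $p$ the paper abandons the classification route entirely and instead argues directly that $\Gamma$ is a rectangle with two opposite edges identified without a flip; your uniform treatment via $\chi(X)=k=2\Rightarrow X=S^2$ is cleaner and avoids having to justify that rectangle picture. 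The only cosmetic point: compactness of $\Gamma$ is most directly seen as a finite union of the compact triangles $T_k$, rather than via ``$\Gamma$ is closed in $B$.''
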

\begin{proof}
Let $p$ be odd. Of the possible homeomorphic images named in Theorem~\ref{classification}, 
only the projective plane is non-orientable; since we know from Lemma~\ref{connectedcomponents}
 that $\Gamma$ has only one boundary component, $\Gamma$ is homeomorphic to the projective 
 plane with a ball removed, which is homeomorphic to the M\"{o}bius strip.

Let $p$ be even. Then $\Gamma$ is a rectangle with two of its edges identified. 
Since the ``right-hand" point of the bottom edge is identified with the 
``right-hand" point of the top edge, it is a cylinder.
\end{proof}

\end{document}